\definecolor{purple}{HTML}{961C8C}
\theoremstyle{plain}
\newtheorem{theorem}{\bf Theorem}[section]
\newtheorem*{theorem*}{Theorem}
\newtheorem*{conjecture*}{Conjecture}
\newtheorem*{problem*}{Problem}
\newtheorem{cor}[theorem]{Corollary}
\newtheorem{lemma}[theorem]{Lemma}
\newtheorem{prp}[theorem]{Proposition}
\newtheorem{mthm}{\bf Main Theorem}
\newtheorem{mcor}[mthm]{\bf Corollary}
\newtheorem{problem}[theorem]{Problem}
\theoremstyle{definition}
\newtheorem{rem}[theorem]{Remark}
\newtheorem{definition}[theorem]{Definition}
\newtheorem{example}[theorem]{Example}
\DeclareFontFamily{OT1}{pzc}{}
\DeclareFontShape{OT1}{pzc}{m}{it}{<-> s * [1.2] pzcmi7t}{}
\DeclareMathAlphabet{\mathpzc}{OT1}{pzc}{m}{it}
\newcommand{\ii}[1]{\iota(#1)}
\newcommand\Defn[1]{\emph{#1}}
\newcommand{\comp}{\operatorname{c}}
\newcommand{\RS}{\operatorname{R}}
\newcommand{\R}{\mathbb{R}}
\newcommand{\CC}{\mathbb{C}}
\newcommand{\Lk}{\mathrm{Lk}}
\newcommand{\F}{\mathrm{F}}
\newcommand{\CF}{\mathrm{C}}
\newcommand{\sd}{\mathrm{sd}\, }
\newcommand{\SSp}{\mathrm{sp}^s}
\newcommand{\Sp}{\mathrm{sp}}
\newcommand{\io}{\mathrm{(A)}}
\newcommand{\iit}{\mathrm{(B)}}
\newcommand{\TT}{\mathrm{T}}
\newcommand{\intx}{\mathrm{int}\,}
\newcommand{\RN}{\mathrm{T}}
\newcommand{\rint}{\mathrm{relint}\,}
\newcommand{\HA}{\mathscr{A}}
\newcommand{\EH}{\HA_e}
\newcommand{\SH}{\HA_\sigma}
\newcommand{\K}{\mathscr{K}}
\newcommand{\s}{\mathbf{s}}
\newcommand{\cc}{2}
\newcommand{\OD}{O}
\newcommand{\FD}{F}
\begin{document}
\setlength{\affilsep}{1mm}

\author{Karim A. Adiprasito\thanks{Supported by an EPDI/IPDE fellowship, the DFG within the research training group ``Methods for Discrete Structures'' (GRK1408) and by the Romanian NASR, CNCS -- UEFISCDI, project PN-II-ID-PCE-2011-3-0533.}}
\affil{\footnotesize Institut des Hautes \'Etudes Scientifiques\\
\footnotesize Le Bois Marie 35, Route de Chartres\\
\footnotesize 91440 Bures-sur-Yvette, France\\
\footnotesize \url{adiprasito@math.fu-berlin.de}, \url{adiprasito@ihes.fr}\\
\mbox{}\\
\footnotesize Received March 7, 2013, Revised June 22, 2014}
\date{\vspace{-5ex}}
\title{Combinatorial stratifications and minimality of 2-arrangements}
\maketitle

\begin{abstract}
We prove that the complement of any affine $2$-arrangement in $\R^d$ is 
\emph{minimal}, that is, it is homotopy equivalent to a cell complex with as 
many $i$-cells as its $i$-th rational Betti number. For the proof, we provide a 
Lefschetz-type hyperplane theorem for complements of 
2-arrangements, and introduce Alexander duality for combinatorial Morse functions. 
Our results greatly generalize previous work by Falk, Dimca--Papadima, Hattori, 
Randell, and Salvetti--Settepanella and others, and they demonstrate that 
in contrast to previous investigations, a purely combinatorial approach suffices 
to show minimality and the Lefschetz Hyperplane Theorem for complements of 
complex hyperplane arrangements.
\end{abstract}

\section{Introduction}

A $c$-arrange\-ment is a finite collection of distinct affine subspaces of $\R^d$, all of codimension $c$, with the property that the codimension of the non-empty intersection of any subset of $\HA$ is a multiple of $c$. For example, after identifying $\CC$ with $\R^2$, any collection of hyperplanes in $\CC^d$ can be viewed as a $2$-arrangement in $\R^{2d}$. However, not all $2$-arrangements arise this way, cf.~\cite[Sec.\ III,\ 5.2]{GM-SMT},~\cite{Z-DRC}. 
In this paper, we study the complement $\HA^{\comp}:=\R^{d}{\setminus} \HA$ of any $2$-arrangement $\HA$ in $\R^d$.

Subspace arrangements $\HA$ and their complements $\HA^{\comp}$ have been extensively studied in several areas of mathematics. Thanks to the work of Goresky and MacPherson~\cite{GM-SMT}, the homology of $\HA^{\comp}$ is well understood; it is determined by the \Defn{intersection poset} of the arrangement, which is the set of all nonempty intersections of its elements, ordered by reverse inclusion. In fact, the intersection poset determines even the homotopy type of the compactification of $\HA$~\cite{ZieZiv}. On the other hand, it does not determine the homotopy type of the complement of $\HA^{\comp}$, even if we restrict ourselves to complex hyperplane arrangements~\cite{Bartolo, Rybnikovold, Rybnikov}, and understanding the homotopy type of $\HA^{\comp}$~remains~challenging.

A standard approach to study the homotopy type of a topological space $X$ is to find a \Defn{model} for it, that is, a CW complex homotopy equivalent to it. By basic cellular homology any model of a space $X$ has at least $\beta_i(X)$ $i$-cells for each $i$, where $\beta_i$ is the $i$-th (rational) Betti number. A natural question arises: Is the complement of an arrangement \Defn{minimal}, i.e., does it have a model with \textit{exactly} $\beta_i(X)$ $i$-cells for all $i$?

Building on previous work by Hattori~\cite{Hattori}, Falk~\cite{Falk} and Cohen--Suciu~\cite{CohenSuciu}, around 2000 Dimca--Papadima~\cite{DimcaPapadima} and Randell~\cite{Randell} independently showed that the complement of any complex hyperplane arrangement is a minimal space. The idea is to establish a Lefschetz-type hyperplane theorem for the complement of the arrangement by first establishing a Lefschetz hyperplane theorem for the Milnor fiber of the arrangement, building on earlier work of Hamm and L{\^e} \cite{Hamm, HammLe}. An elegant inductive argument completes their proof. 

On the other hand, the complement of an arbitrary subspace arrangement is, in general, \emph{not} minimal. In fact, complements of subspace arrangements might have arbitrary torsion in cohomology (cf.~\cite[Sec.\ III, Thm.\ A]{GM-SMT}). This naturally leads to the following question:
\begin{problem}[Minimality]\label{prb:mini}
Is the complement $\HA^{\comp}$ of an arbitrary $c$-arrangement $\HA$ minimal?
\end{problem}

The interesting case is $c=2$. In fact, if $c$ is not $2$, the complements of $c$-arrangements, and even $c$-arrangements of pseudospheres (cf.~\cite[Sec.\ 8 \& 9]{BjZie}), are easily shown to be minimal; see Section~\ref{sec:car}. 
In 2007, Salvetti--Settepanella~\cite{SalSet} proposed a combinatorial approach to Problem~\ref{prb:mini}, based on Forman's discretization of Morse theory~\cite{FormanADV}. Discrete Morse functions are defined on regular CW complexes rather than on manifolds; instead of critical points, they have combinatorially-defined \Defn{critical faces}. Any discrete Morse function with $c_i$ critical $i$-faces on a complex $C$ yields a model for $C$ with exactly $c_i$ $i$-cells (cf.\ Theorem~\ref{thm:MorseThmw}). 
Salvetti--Settepanella studied discrete Morse functions on the \Defn{Salvetti complexes}~\cite{Salvetti}, which are models for complements of complexified real arrangements. Remarkably, they found that all Salvetti complexes admit \Defn{perfect} discrete Morse functions, that is, functions with exactly $\beta_i(\HA^{\comp})$ critical $i$-faces. Formans's Theorem~\ref{thm:MorseThmw} now yields the desired minimal models for $\HA^{\comp}$.
 
This tactic does not directly extend to the generality of complex hyperplane arrangements. However, models for complex arrangements, and even for $c$-arrangements, have been introduced and studied by Bj\"orner and Ziegler~\cite{BjZie}. In the case of complexified-real arrangements, their models contain the Salvetti complex as a special case. While our notion of the combinatorial stratification is slightly more restrictive than Bj\"orner--Ziegler's, cf.\ Section~\ref{ssc:2-arrangements}, it still includes most of the combinatorial stratifications studied in~\cite{BjZie}. For example, we still recover the $\s^{(1)}$-stratification which gives rise to the Salvetti complex. With these tools at hand, we can tackle Problem~\ref{prb:mini} combinatorially:
\enlargethispage{3mm}
\begin{problem}[Optimality of classical models]\label{prb:cmpt}
Are there perfect discrete Morse functions on the Bj\"orner--Ziegler models for the complements of arbitrary $\cc$-arrangements?
\end{problem}

We are motivated by the fact that discrete Morse theory provides a simple yet powerful tool to study stratified spaces. On the other hand, there are several difficulties to overcome. In fact, Problem~\ref{prb:cmpt} is more ambitious than Problem~\ref{prb:mini} in many respects:

\begin{compactitem}[$\circ$]
\item Few regular CW complexes, even among the minimal ones, admit perfect discrete Morse functions. For example, many $3$-balls~\cite{Bing} and many contractible $2$-complexes~\cite{Zeeman} are not collapsible.
\item There are few results in the literature predicting the existence of perfect Morse functions. For example, it is not known whether any subdivision of the $4$-simplex is collapsible, cf.~\cite[Prb.\ 5.5]{Kirby}.
\item Solving Problem~\ref{prb:cmpt} could help in obtaining a more explicit picture of the attaching maps for the minimal model; compare Salvetti--Settepanella~\cite{SalSet} and Yoshinaga~\cite{Yoshi}.
\end{compactitem}

\noindent In this paper, we answer both problems in the affirmative.

\begin{mthm}[Theorem~\ref{thm:BZperfect}]\label{MTHM:BZP}
Any complement complex of any $\cc$-arrange\-ment $\HA$ in $S^d$ or~$\R^d$ admits a perfect discrete Morse function.
\end{mthm}

\begin{mcor}\label{mcor:m}
The complement of any affine $\cc$-arrangement in $\R^d$, and the complement of any $\cc$-arrangement in $S^d$, is a minimal space. 
\end{mcor}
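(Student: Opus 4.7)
The plan is to derive Main Corollary~\ref{mcor:m} as an essentially formal consequence of Main Theorem~\ref{MTHM:BZP} together with Forman's theorem on discrete Morse functions (Theorem~\ref{thm:MorseThmw}).

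First, I would recall that the Bj\"orner--Ziegler complement complex associated with a $\cc$-arrangement $\HA$ in $\R^d$ (respectively, in $S^d$) is a regular CW complex whose underlying space is homotopy equivalent to $\HA^{\comp}$; in particular, it is a \emph{model} for the complement in the sense of the introduction. Therefore, to exhibit a minimal model for $\HA^{\comp}$ it suffices to exhibit one for its complement complex. In the affine case, one may either work with the affine version of the Bj\"orner--Ziegler model directly, or first one-point compactify to reduce to the spherical case, noting that removing the added point does not affect minimality.

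Next, I would invoke Main Theorem~\ref{MTHM:BZP} to obtain a perfect discrete Morse function $f$ on the complement complex, meaning that $f$ has exactly $\beta_i(\HA^{\comp})$ critical $i$-faces for every $i \geq 0$. Applying Forman's Theorem~\ref{thm:MorseThmw} to $f$ produces a CW complex, homotopy equivalent to the complement complex, with exactly $\beta_i(\HA^{\comp})$ cells in dimension $i$. Composing this equivalence with the homotopy equivalence between the complement complex and $\HA^{\comp}$ yields a CW model for $\HA^{\comp}$ with exactly $\beta_i(\HA^{\comp})$ cells in each dimension; this is by definition a minimal model.

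There is no genuine obstacle at this stage: once Main Theorem~\ref{MTHM:BZP} is granted, the reduction is pure book-keeping. The only point worth explicitly checking is that the Bj\"orner--Ziegler complement complex is available as a model in both the affine and the spherical settings, which is classical. All the substantive work---the combinatorial stratifications, the Lefschetz-type hyperplane theorem for complements of $\cc$-arrangements, and the Alexander duality for combinatorial Morse functions advertised in the abstract---has been absorbed into the proof of Main Theorem~\ref{MTHM:BZP}, so the corollary itself occupies only a few lines.
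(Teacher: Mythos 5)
Your proposal is correct and follows essentially the same route as the paper: the corollary is obtained by combining Main Theorem~\ref{MTHM:BZP} with the fact that complement complexes are models for $\HA^{\comp}$ (the lemma quoted from Bj\"orner--Ziegler) and with Corollary~\ref{cor:MorseThm} (Forman's Theorem~\ref{thm:MorseThmw} applied to a perfect Morse matching). The only minor deviation is your aside about one-point compactification in the affine case; the paper instead builds the affine complement complex directly by restricting a spherical stratification to an open hemisphere (Definition~\ref{def:affine}), but since you also offer the direct route, nothing is affected.
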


A crucial step on the way to the proof of Theorem~\ref{MTHM:BZP} is the proof of a Lefschetz-type hyperplane theorem for the complements of $\cc$-arrangements. The lemma we actually need is a bit technical (Corollary~\ref{cor:lef}), but its topological core can be phrased in the following way:

\begin{mthm}[Theorem~\ref{thm:lef}] \label{MTHM:LEFT}
Let $\HA^{\comp}$ denote the complement of any affine $\cc$-arrangement $\HA$ in $\R^d$, and let $H$ be any hyperplane in $\R^d$ in general position with respect to $\HA$. Then $\HA^{\comp}$ is homotopy equivalent to $H\cap\HA^{\comp}$ with finitely many $e$-cells attached, where $e = \lceil\nicefrac{d}{\cc}\rceil = d-\lfloor\nicefrac{d}{\cc}\rfloor$.
\end{mthm}

An analogous theorem holds for complements of $c$-arrangements ($c\neq 2$, with $e = d-\lfloor\nicefrac{d}{c}\rfloor$); it is an immediate consequence of the analogue of Corollary~\ref{mcor:m} for $c$-arrangements, $c\neq 2$, cf.\ Section~\ref{sec:car}.
Theorem~\ref{MTHM:LEFT} extends a result on complex hyperplane arrangements, which follows from Morse theory applied to the Milnor fiber~\cite{DimcaPapadima,
HammLe, Randell}. The main ingredients to our study are:

\begin{compactitem}[$\circ$]
\item the formula to compute the homology of subspace arrangements in terms of the intersection lattice, due to Goresky and MacPherson~\cite{GM-SMT}; cf.\   Lemma~\ref{LEM:LHTCA};
\item the study of combinatorial stratifications as initiated by Bj\"orner and Ziegler~\cite{BjZie}; cf.\ Section~\ref{ssc:2-arrangements};
\item the study of the collapsibility of complexes whose geometric realizations satisfy certain geometric constraints, as discussed previous work of Benedetti and the author, cf.~\cite{AB-SSZ}; this is for example used in the proof of Theorem~\ref{thm:hemisphere};
\item the idea of Alexander duality for Morse functions, in particular the elementary notion of ``out-$j$ collapse'', introduced in Section~\ref{ssc:relcollapse};
\item the notion of (Poincar\'e) duality of discrete Morse functions, which goes back to Forman~\cite{FormanADV}. This is used to establish discrete Morse functions on complement complexes, cf.\ Theorem~\ref{ssc:cmpm}.
\end{compactitem}

\section{Preliminaries}\label{sec:prelim}
We use this section to recall the basic facts on discrete Morse theory, $\cc$-arrangements and combinatorial stratifications, and to introduce some concepts that we shall use for the proofs of the main results of this paper. A notion central to (almost) every formulation of Lefschetz-type hyperplane theorems is that of \Defn{general position}. In our setting, we can make this very precise. Recall that a \Defn{polyhedron} in $S^d$ (resp.\ $\R^d$) is an intersection of closed hemispheres (resp.\ halfspaces).

\begin{definition} If $H$ is a hyperplane in $S^d$ (resp.\ $\R^d$), then $H$ is in \Defn{general position} with respect to a polyhedron if $H$ intersects the span (resp.\ affine span) of any face of the polyhedron transversally. This notion extends naturally to collections of polyhedra (such as for instance polytopal complexes or subspace arrangements): A hyperplane is in \Defn{general position} to such a collection if it is in general position with respect to all of its elements, and every intersection of its elements. We say that a hemisphere is in \Defn{general position} with respect to a collection of polyhedra if the boundary of the hemisphere is in general position with respect to the collection.
\end{definition}

To prove the main results, we work with arrangements in $S^d$, and treat the euclidean case as a special case of the spherical one. The sphere $S^d$ shall always be considered as the unit sphere in $\R^{d+1}$ (with midpoint at the origin). An \Defn{$i$-dimensional subspace in $S^d$} is the intersection of the unit sphere $S^d$ with some $(i+1)$-dimensional linear subspace of $\R^{d+1}$.  A \Defn{hyperplane in $S^d$} is a $(d-1)$-dimensional subspace in $S^d$. We use $\Sp(X)$ to denote the \Defn{linear span} of a set $X$ in~$\R^d$, and if $X$ is a subset of $S^d\subset \R^{d+1}$, we define $\SSp(X)=\Sp(X)\cap S^d$, the \Defn{spherical span} of $X$ in $S^d$. We use $\intx X$, $\rint X$ and $\partial X$ to denote the \Defn{interior}, \Defn{relative interior} and \Defn{boundary} of a set $X$ respectively. We will frequently abuse notation and treat a subspace arrangement both a collection of subspaces and the union of its elements; for instance, we will write $\R^d{\setminus} \HA$ to denote the complement of an arrangement $\HA$ in $\R^d$. 

\subsection{Discrete Morse theory and duality, I}\label{sec:dmt}

We shall use this section to recall the main terminology for discrete Morse theory used here. For more information on discrete Morse theory, we refer the reader to Forman~\cite{FormanADV}, Chari~\cite{Chari} and Benedetti~\cite{B-DMT4MWB}. For \Defn{CW complexes} and \Defn{regular CW complexes} we refer the reader to Munkres~\cite[Ch.\ 4,\ \S 38]{Munkres}. Roughly speaking, a regular CW complex is a collection of open balls, the \Defn{cells}, identified along homeomorphisms of their boundaries. The closures of the cells of a regular CW complex $C$ are the \Defn{faces} of the complex, the union of which we denote by~$\F(C)$. The common terminology used for simplicial and polytopal complexes (\Defn{facet}, \Defn{dimension}) naturally extends to regular CW complexes, we refer to \cite{Grunbaum, RourkeSanders, Z} for the basic notions. For the notion of \Defn{dual block complex}, we also refer to Munkres~\cite[Ch.\ 8,\ \S 64]{Munkres}, and Bj\"orner et al.~\cite[Prp.\ 4.7.26(iv)]{BLSWZ}.

A  \Defn{discrete vector field} $\varPhi$ on a regular CW complex $C$ is a collection of pairs~$(\sigma,\varSigma)$ of nonempty faces of $C$, the \Defn{matching pairs} of $\varPhi$, such that $\sigma$ is a codimension-one face of $\varSigma$, and no face of $C$ belongs to two different pairs of $\varPhi$. If $(\sigma,\varSigma)\in \varPhi$, we also say that $\sigma$ \Defn{is matched with} $\varSigma$ in $\varPhi$. 
A \Defn{gradient path} in $\varPhi$ is a union of pairs in $\varPhi$
\[ (\sigma_0, \varSigma_0), (\sigma_1, \varSigma_1),  \ldots, (\sigma_k, \varSigma_k),\ k\ge 1,\]
such that $\sigma_{i+1}\neq \sigma_i$ and $\sigma_{i+1}$ is a codimension-one face of $\varSigma_i$ for all $i\in\{0,1,\, \cdots, k-1\}$; the gradient path is \Defn{closed} if $\sigma_0 = \sigma_k$. A discrete vector field $\varPhi$ is a \Defn{Morse matching} if $\varPhi$ contains no closed gradient paths. Morse matchings, a notion due to Chari~\cite{Chari}, are only one of several ways to represent \Defn{discrete Morse functions}, which were originally introduced by Forman~\cite{FormanADV, FormanUSER}. They can be thought of as the gradient flow associated to a Morse function.

Discrete Morse theory is a generalization of Whitehead's notion of collapsibility \cite{Whitehead}: Recall that a regular CW complex $C$ \Defn{elementary collapses} to a subcomplex $C'$ if $C'$ is obtained from $C$ by deleting a nonempty face $\sigma$ that is strictly contained in only one other face of $C$. We say $C$ \Defn{collapses} to $C'$, and write $C\searrow C'$, if $C$ can be deformed to $C'$ by elementary collapsing steps; $C$ is \Defn{collapsible} if it collapses to some vertex. A collapse provides a certificate for homotopy equivalence, hence, every collapsible complex is contractible.

It is an easy exercise to show that a regular CW complex $C$ is collapsible if and only if it admits a Morse matching with the property that all but one face are in one of the matching pairs. 
Forman generalized this observation with the following notion: If $\varPhi$ is a Morse matching on a regular CW complex $C$, then $\CF(\varPhi)$ is the set of \Defn{critical faces}, that is, $\CF(\varPhi)$ is the set of faces of $C$ that are not in any of the matching pairs of $\varPhi$. We set $\CF_i(\varPhi)$ to denote the subset of elements of $\CF(\varPhi)$ of dimension $i$, and we use $c_i(\varPhi)$ to denote the cardinality of this set. Let $A \simeq B$ indicate the homotopy equivalence of $A$ and $B$.

\begin{theorem}[Forman {\cite[Cor.\ 3.5]{FormanADV}}, {\cite[Thm.\ 3.1]{Chari}}]\label{thm:MorseThmw}
Let $C$ be a regular CW complex. Given any Morse matching $\varPhi$ on $C$, we have $C\simeq \Sigma(\varPhi)$, where $\Sigma(\varPhi)$ is a CW complex whose $i$-cells are in natural bijection with the critical $i$-faces of the Morse matching $\varPhi$.
\end{theorem}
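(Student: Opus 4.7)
My plan is to reconstruct $\Sigma(\varPhi)$ from $C$ by performing a sequence of collapses of matched pairs, organized by a filtration of $C$ compatible with the matching. First, I form the modified Hasse diagram $H$ of $C$: its vertices are the faces of $C$, and I draw a directed edge from $\varSigma$ to $\sigma$ whenever $\sigma$ is a codimension-one face of $\varSigma$, with the one modification that for each matched pair $(\sigma,\varSigma)\in\varPhi$ I reverse the corresponding edge (so it points from $\sigma$ to $\varSigma$). The hypothesis that $\varPhi$ contains no closed gradient path is precisely the statement that $H$ has no directed cycle, so, since $C$ is finite, $H$ admits a linear extension. I use this linear extension to order the cells of $C$, with each matched pair $(\sigma,\varSigma)$ processed in a single ``block'' (adding $\sigma$ immediately followed by $\varSigma$). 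This produces a filtration $\emptyset = C_0 \subset C_1 \subset \cdots \subset C_N = C$ by regular CW subcomplexes, in which each step $C_{i-1} \to C_i$ either adds a single critical face or a complete matched pair.

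Next I verify that each $C_i$ is actually a subcomplex and that matched-pair steps are collapsible. For a critical cell $\tau$ added at step $i$, every codimension-one face of $\tau$ is the target of an edge $\tau\to\tau'$ in $H$, hence precedes $\tau$ and lies in $C_{i-1}$. For a matched pair $(\sigma,\varSigma)$, every codimension-one face of $\varSigma$ other than $\sigma$, and every codimension-one face of $\sigma$, precedes $\sigma$ in the linear extension and therefore lies in $C_{i-1}$; in particular, $\sigma$ is a free face of $\varSigma$ in $C_i$. Homotopy equivalence now proceeds step by step. At a critical-cell step I attach a single $i$-cell to $C_{i-1}$ via its attaching map in $C$, which I record as an $i$-cell attachment in the model being built. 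At a matched-pair step, $C_i$ collapses elementarily onto $C_{i-1}$, so $C_i \simeq C_{i-1}$. Composing these equivalences, $C$ is homotopy equivalent to the CW complex produced by performing only the critical-cell attachments, and this complex has precisely $c_i(\varPhi)$ cells of dimension $i$; this is $\Sigma(\varPhi)$.

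The main obstacle is not the homotopy-theoretic bookkeeping but the verification that $\Sigma(\varPhi)$ is bona fide a CW complex rather than just a space of the correct homotopy type. The collapses at matched-pair steps provide strong deformation retracts, which one must compose with the attaching maps of the critical cells inherited from $C$ to obtain attaching maps for $\Sigma(\varPhi)$; one must then argue that each composite map lands in the appropriate lower skeleton. Here the acyclicity enters a second time, through the gradient-path description of the attaching maps: the attaching map of a critical $i$-cell is controlled by the finite collection of gradient paths of $\varPhi$ descending from it, and the absence of closed gradient paths keeps this description finite and well-defined. I would handle this subtlety either by invoking Whitehead's theorem together with cellular approximation to upgrade the space-level equivalence to a CW model, or by directly tracking the attaching maps as in the original arguments of Forman and Chari.
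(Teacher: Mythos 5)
Your argument is correct and is essentially the approach the paper itself takes: the paper cites Forman and Chari for this statement but proves the more general Theorem~\ref{thm:MorseThm} by exactly this device --- the modified Hasse diagram $G_{\varPhi}(C)$ with matched edges reversed, acyclicity from the absence of closed gradient paths, and an ordering of the faces in which each step is either an elementary collapse of a matched pair or the attachment of a critical cell. The only cosmetic difference is direction (you build $C$ up along a linear extension with matched pairs in adjacent blocks, while the paper repeatedly deletes a source of the acyclic digraph from the top), and you correctly flag the one genuine subtlety, namely tracking the attaching maps of $\Sigma(\varPhi)$, which the paper likewise leaves to Forman and Chari.
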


Theorem~\ref{thm:MorseThmw} is a special case of a more powerful result of Forman, Theorem~\ref{thm:MorseThm}. To state his theorem in a form convenient to our research, however, we need some more notation; we return to this in Section~\ref{sec:morsethm}. The notion of Morse matchings with critical faces subsumes Whitehead's notion of collapses. 
\begin{compactitem}[$\circ$]
\item A regular CW complex $C$ is collapsible if and only if it admits a Morse matching with only one critical~face. 
\item More generally, a regular CW complex $C$ collapses to a subcomplex $C'$ if and only if $C$ admits a Morse matching $\varPhi$ with $\CF(\varPhi)= C'$. In this case, $\varPhi$ is called a \Defn{collapsing sequence} from $C$ to $C'$. 
\end{compactitem}
Recall that a \Defn{perfect Morse matching} $\varPhi$ on a regular CW complex $C$ is a Morse matching with $c_i(\varPhi)=\beta_i(C)$ for all $i$, where $\beta_i$ denotes the $i$-th rational Betti number. Recall also that a \Defn{model} for a topological space is a CW complex homotopy equivalent to it, and that a topological space is \Defn{minimal} if it has a model $\Sigma$ consisting of precisely $\beta_i(\Sigma)$ cells of dimension $i$ for all $i$.

\begin{cor}\label{cor:MorseThm}
Let $C$ denote a regular CW complex that admits a perfect Morse matching. Then $C$ is minimal.
\end{cor}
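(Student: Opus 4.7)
The plan is to read this corollary as a direct combination of Forman's theorem (Theorem~\ref{thm:MorseThmw}) with the definition of minimality and the homotopy invariance of Betti numbers. Concretely, given a perfect Morse matching $\varPhi$ on the regular CW complex $C$, I would first invoke Theorem~\ref{thm:MorseThmw} to produce a CW complex $\Sigma(\varPhi)$ homotopy equivalent to $C$, whose $i$-cells are in natural bijection with the critical $i$-faces $\CF_i(\varPhi)$. Thus $\Sigma(\varPhi)$ has exactly $c_i(\varPhi)$ cells of dimension $i$.

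Next I would use the fact that $C$ is perfect, meaning by definition $c_i(\varPhi)=\beta_i(C)$ for every $i$. Since $C\simeq \Sigma(\varPhi)$, rational Betti numbers agree, so $\beta_i(C)=\beta_i(\Sigma(\varPhi))$, and therefore the number of $i$-cells of $\Sigma(\varPhi)$ equals $\beta_i(\Sigma(\varPhi))$ in every dimension. That is exactly the definition of a minimal model, so $C$ is minimal.

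There is essentially no obstacle here; the content of the statement is entirely packaged into Theorem~\ref{thm:MorseThmw}, and the only thing to note is the trivial observation that homotopy equivalence preserves rational Betti numbers, which allows one to replace $\beta_i(C)$ by $\beta_i(\Sigma(\varPhi))$ when comparing with the cell count. The proof therefore reduces to two lines, and can be presented simply by exhibiting $\Sigma(\varPhi)$ as the required minimal model for $C$.
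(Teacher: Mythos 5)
Your proof is correct and is exactly the argument the paper intends: the corollary is stated without proof precisely because it follows immediately from Theorem~\ref{thm:MorseThmw} together with the definitions of ``perfect'' and ``minimal'' and the homotopy invariance of rational Betti numbers. Exhibiting $\Sigma(\varPhi)$ as the minimal model is all that is needed.
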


Let $C$ denote a regular CW complex. Let $\sd C$ denote any (simplicial) complex combinatorially equivalent to the order complex of the face poset $\mathcal{P}(C)$ of nonempty faces of $C$. The complex $C$ is a \Defn{(closed) PL $d$-manifold} if the link of every vertex of $\sd C$ has a subdivision that is combinatorially equivalent to some subdivision of the boundary of the $(d+1)$-simplex.

Now, recall that the faces of $\sd C$ correspond to chains of inclusion in $\mathcal{P}(C)$. To any face $\sigma$ of $C$ we associate the union $\sigma^\ast$ of faces of $\sd C$ which correspond to chains in $\mathcal{P}(C)$ with minimal element $\sigma$. Assume now $C$ is a closed PL manifold of dimension $d$, then $\rint \sigma^\ast$, the cell \Defn{dual} to $\sigma$ in $C$, is an open ball of dimension $d-\dim \sigma$. The collection $C^\ast$ of cells $\rint \sigma^\ast,\ \sigma \in \F(C)$, together with the canonical attaching homeomorphisms, forms a regular CW complex, the \Defn{dual block complex} to $C$ (for details, we refer the reader to Proposition 4.7.26(iv) in~\cite{BLSWZ}). Naturally, $\sigma^\ast$ is the \Defn{dual face} to $\sigma$.

If $\varPhi$ is a Morse matching on $C$, then a matching $\varPhi^\ast$, the \Defn{dual} to $\varPhi$, is induced by the map $\sigma\mapsto\sigma^\ast$ as follows: \[\varPhi^\ast:=\{(\sigma,\varSigma)^\ast : (\sigma,\varSigma)\in \varPhi \},\ \text{where}\ (\sigma,\varSigma)^\ast{:=}(\varSigma^\ast,\sigma^\ast).\]

\begin{theorem}[cf.\ Benedetti {\cite[Thm.\ 3.10]{B-DMT4MWB}},\ Forman {\cite[Thm.\ 4.7]{FormanADV}}]\label{thm:dual}
Let $C$ be a regular CW complex that is also a closed PL $d$-manifold, and let $\varPhi$ denote a Morse matching on $C$. Then $\varPhi^\ast$ is a Morse matching on the regular CW complex $C^\ast$, and the map assigning each face of $C$ to its dual in $C^\ast$ restricts to a natural bijection from $\CF(\varPhi)$ to $\CF(\varPhi^\ast)$.
\end{theorem}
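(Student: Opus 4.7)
The plan is to exploit the fact that the dual block complex $C^\ast$ of a closed PL $d$-manifold is a regular CW complex whose face poset is anti-isomorphic to that of $C$ via $\sigma\mapsto\sigma^\ast$, with $\dim \sigma^\ast=d-\dim \sigma$. In particular, $\sigma$ is a codimension-one face of $\varSigma$ in $C$ if and only if $\varSigma^\ast$ is a codimension-one face of $\sigma^\ast$ in $C^\ast$ (this is the content of Proposition~4.7.26(iv) of~\cite{BLSWZ}). Granted this, I would proceed in three steps.

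First, I would check that $\varPhi^\ast$ is a discrete vector field on $C^\ast$. Every pair $(\varSigma^\ast,\sigma^\ast)\in \varPhi^\ast$ has $\varSigma^\ast$ as a codimension-one face of $\sigma^\ast$ by the duality relation above, and since $\sigma\mapsto\sigma^\ast$ is a bijection on nonempty faces, a face of $C^\ast$ would appear in two distinct pairs of $\varPhi^\ast$ only if some face of $C$ appeared in two distinct pairs of $\varPhi$, which is excluded. The critical faces part of the statement is then immediate: $\tau^\ast$ lies in no pair of $\varPhi^\ast$ iff $\tau$ lies in no pair of $\varPhi$, and the dimension flip $\dim\tau^\ast=d-\dim\tau$ gives the asserted natural bijection $\CF(\varPhi)\to \CF(\varPhi^\ast)$.

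The substantive step is to show that $\varPhi^\ast$ has no closed gradient paths. My plan is to show that \emph{reversal} sets up a bijection between gradient paths in $\varPhi$ and gradient paths in $\varPhi^\ast$. Given a gradient path
\[(\alpha_0,\beta_0),(\alpha_1,\beta_1),\ldots,(\alpha_k,\beta_k)\]
in $\varPhi$, with $\alpha_{i+1}$ a codimension-one face of $\beta_i$ and $\alpha_{i+1}\neq \alpha_i$, I consider the reversed sequence of dual pairs
\[(\beta_k^\ast,\alpha_k^\ast),(\beta_{k-1}^\ast,\alpha_{k-1}^\ast),\ldots,(\beta_0^\ast,\alpha_0^\ast)\]
in $\varPhi^\ast$. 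The condition "$\alpha_{i+1}$ codim-one in $\beta_i$" dualizes to "$\beta_i^\ast$ codim-one in $\alpha_{i+1}^\ast$", which is exactly the required incidence for consecutive pairs in the reversed sequence. It remains to verify $\beta_{i+1}^\ast\neq \beta_i^\ast$, equivalently $\beta_{i+1}\neq \beta_i$; but if two pairs of $\varPhi$ shared their "large" face, the matching condition would force them to be the same pair, contradicting $\alpha_{i+1}\neq \alpha_i$. Hence reversal sends gradient paths to gradient paths, and it is clearly an involution on the set of such paths; it also sends closed paths to closed paths. Thus $\varPhi^\ast$ has a closed gradient path iff $\varPhi$ does, proving that $\varPhi^\ast$ is a Morse matching.

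I do not expect a serious obstacle here; the only place where care is needed is in aligning the two orientation conventions (the reversal of time and the flip of incidence directions) when translating the Morse condition from $C$ to $C^\ast$. Once the codimension-one duality of the face poset is in hand, everything else is a bookkeeping exercise.
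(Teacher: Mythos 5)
Your argument is correct: the paper itself states Theorem~\ref{thm:dual} without proof, citing Forman and Benedetti, and your reversal-of-gradient-paths argument is exactly the standard proof found in those sources (duality anti-isomorphism of face posets, hence matching pairs dualize to matching pairs, critical faces to critical faces, and gradient paths reverse to gradient paths). The only detail worth making explicit is that closedness is also preserved: the original path is closed when $\alpha_0=\alpha_k$, while the reversed dual path is closed when $\beta_0=\beta_k$, but these are equivalent because two pairs of a matching sharing a face must coincide --- the same observation you already use to verify $\beta_{i+1}\neq\beta_i$.
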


\subsection{2-arrangements, combinatorial stratifications, complement complexes} \label{ssc:2-arrangements}

In this section, we introduce $\cc$-arrangements, their combinatorial stratifications and complement complexes, guided by~\cite{BjZie}. All subspace arrangements considered in this exposition are finite. 

A \Defn{$\cc$-arrangement} $\HA$ in $S^d$ (resp.\ in $\R^d$) is a finite collection of distinct ${(d-\cc)}$-dim\-ensional subspaces of $S^d$ (resp.\ of $\R^d$), such that the codimension of any non-empty intersection of its elements is a multiple of~$\cc$.  Any $\cc$-arrangement with $d<\cc$ is the empty arrangement. 
A subspace arrangement $\HA$ is \Defn{essential} if the intersection of all elements of $\HA$ is empty, and \Defn{non-essential} otherwise. By convention, the non-essential arrangements include the empty arrangement. The following definitions apply more generally to the bigger class of \Defn{codim-$\cc$-arrangements}, which are finite collections of subspaces of codimension $\cc$, to allow us to pass between $\R^d$ and $S^d$ without problems, cf.\ Remark~\ref{rem:cod}. 

Recall that an interval in $\mathbb{Z}$ is a set of the form $[a,b]:=\{x\in \mathbb{Z}: a\le x\le b\}.$

\begin{definition}[Extensions and stratifications]
A \Defn{sign extension} $\SH$ of a codim-$\cc$-arrangement $\HA=\{h_i : i\in [1,n]\}$ in $S^d$ is any collection of hyperplanes $\{H_{i}\subset S^d : {i\in [1,n]}\}$ such that for each $i$, we have $ h_i\subset H_{i}$. We say that the subspaces $S^d$, $H_i$ and $h_i$ itself \Defn{extend} $h_i$. A \Defn{hyperplane extension} $\EH$ of $\HA$ in $S^d$ is a sign extension of $\HA$ together with an arbitrary finite collection of hyperplanes in $S^d$.

 Consider now any subset $S$ of elements of $\EH$ and any point $x$ in $S^d$. The \Defn{stratum} associated to $x$ and $S$ is the set of all points that can be connected to $x$ by a curve that lies in all elements of $S$, and intersects no element of $\EH\setminus S$, cf.~\cite[Sec. 2]{BjZie}. The nonempty strata obtained this way form a partition of $S^d$ into convex sets, the \Defn{stratification} of $S^d$ given by~$\EH$. To shorten the notation, we will sometimes say that a stratification $\s$ is induced by a codim-$\cc$-arrangement $\HA$ if it is given by some extension $\EH$ of $\HA$.
\end{definition}

The inclusion map of closures of strata of a stratification gives rise to canonical attaching homeomorphisms of the strata to each other.

\begin{definition}[Fine extensions and combinatorial stratifications]
Let $\HA$ be a {codim-$\cc$-arrangement}. We say that an extension $\EH$ of $\HA$ is \Defn{fine} if it gives rise to a stratification $\s(\EH)$ that, together with the canonical attaching maps, is a regular CW complex; in this case, the stratification $\s(\EH)$ is \Defn{combinatorial}. For instance, if $\HA$ is essential, then any stratification $\s$ of $S^d$ induced by it is combinatorial.
\end{definition}

Let $C$ be a subcomplex of a combinatorial stratification of $S^d$. Let $M$ be an arbitrary subset of $S^d$. The \Defn{restriction} $\RS(C,M)$ of $C$ to $M$ is the maximal subcomplex of $C$ all whose faces are contained in~$M$. If $D$ is a subcomplex of $C$ then $C-D:=\RS(C, C{\setminus}\, \rint D)$ is the \Defn{deletion} of $D$ from $C$. 

Dually, let $\s^\ast$ be the dual block complex of a combinatorial stratification $\s$ of $S^d$, and let $C$ be any subcomplex of $\s^\ast$. Then, $\RS^\ast(C,M)$ is the minimal subcomplex of $C\subset \s^\ast$ containing all those faces of $C$ that are dual to faces of $\s$ intersecting~$M$.

\begin{definition}[Complement complexes for spherical codim-$\cc$-arrangements]
Let $\HA$ be a codim-$\cc$-arrangement in $S^d$, and let $\s$ be a combinatorial stratification of $S^d$ induced by $\HA$. With this, define the \Defn{complement complex} of $\HA$ with respect to $\s$ as the regular CW complex \[\K(\HA,\s):=\RS^\ast(\s^\ast,S^d{\setminus} \HA).\] Equivalently, $\K(\HA,\s)$ is the subcomplex of $\s^\ast$ consisting of the duals of the faces that are \emph{not} contained in $\RS(\s,\HA)$.
\end{definition}

\begin{definition}[Complement complexes for affine codim-$\cc$-arrangements]\label{def:affine}
Let $\HA$ denote a codim-$\cc$-arrangement of affine subspaces in $\R^{d}$, and let $\rho$ denote a radial projection of $\R^{d}$ to an open hemisphere $\OD$ in $S^d$. Extend the image $\rho(\HA):=\{\rho(h): h\in \HA\}$ to the codim-$\cc$-arrangement \[\HA':=\{\SSp(h)\subset S^d: h\in \rho(\HA)\}\] in $S^d$, and
consider any combinatorial stratification $\s$ of $S^d$ induced by $\HA'$. We say that $\RS^\ast(\s^\ast,\OD{\setminus} \HA')$ is a \Defn{complement complex} of $\HA$.
\end{definition}

\begin{rem}[$\cc$-arrangement with respect to an open hemisphere] \label{rem:cod}
Definition~\ref{def:affine} is the reason for defining stratifications in the generality of codim-$\cc$-arrangements; if $\HA$ is an affine $\cc$-arrangement in $\R^d$, then $\HA'$ (compare the preceding definition) \emph{is not} in general a $\cc$-arrangement in $S^d$. However, $\HA'$ \emph{is} a \Defn{$\cc$-arrangement w.r.t.\ $\OD$} in $S^d$, that is, every non-empty intersection of elements of $\HA'$ and $\OD$ has a codimension divisible by $\cc$.
\end{rem}

\begin{lemma}[cf.\ {\cite[Prp.\ 3.1]{BjZie}}]
Let $\HA$ be a codim-$\cc$-arrangement in $\R^d$ (resp.~$S^d$). Then every complement complex of $\HA$ is a model for the complement $\HA^{\comp}=\R^d{\setminus}\HA$ (resp.\ $S^d{\setminus}\HA$).
\end{lemma}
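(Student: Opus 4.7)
The plan is to construct a strong deformation retraction of $\HA^{\comp}$ onto $|\K(\HA,\s)|$, handling the spherical case first and reducing the affine case of Definition~\ref{def:affine} to it via the radial projection~$\rho$.

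First I would record the basic structural consequence of the definition of a combinatorial stratification: since the sign extension $\SH\subseteq\EH$ contains every element of $\HA$, each open stratum $\rint\sigma$ is either entirely contained in $\HA$ or entirely disjoint from $\HA$. Hence $\HA=|\RS(\s,\HA)|$, and the collection $Z$ of faces $\sigma$ of $\s$ with $\sigma\not\subset\HA$ is upward-closed in the face order of $\s$. Dualizing reverses the order, so $\{\sigma^\ast:\sigma\in Z\}$ is downward-closed in $\s^\ast$ and is a bona-fide subcomplex; a direct comparison identifies it with $\K(\HA,\s)$, and decomposing closed dual blocks into open dual cells yields the matching disjoint-union descriptions $|\K(\HA,\s)|=\bigsqcup_{\sigma\in Z}\rint\sigma^\ast$ and $S^d\setminus\HA=\bigsqcup_{\sigma\in Z}\rint\sigma$.

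Next I would write down the retraction inside the barycentric subdivision $\sd\s$. Every $p\in S^d\setminus\HA$ sits in a unique open simplex $\rint(b_{\sigma_0}\cdots b_{\sigma_k})$ of $\sd\s$ corresponding to a chain $\sigma_0<\cdots<\sigma_k$, with $\sigma_k$ equal to the $\s$-face containing $p$ in its interior (so $\sigma_k\in Z$) and barycentric coordinates $p=\sum_i t_i b_{\sigma_i}$ with all $t_i>0$. Define the linear homotopy that drives the weights $t_i$ with $\sigma_i\notin Z$ down to zero while proportionally rescaling the remaining weights to still sum to $1$. For $s<1$ all coordinates stay positive, so the image remains in the open barycentric simplex and hence in $\rint\sigma_k\subseteq S^d\setminus\HA$; at $s=1$ only the indices with $\sigma_i\in Z$ survive, and the endpoint lies in the subsimplex spanned by their barycenters, contained in $\rint\sigma_{i_0}^\ast$ for the smallest surviving $\sigma_{i_0}\in Z$ and hence in $|\K(\HA,\s)|$. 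The rule ``zero out $t_i$ whenever $\sigma_i\notin Z$'' depends only on the individual vertex labels, so the local formulas glue into a continuous map, and the upward-closedness of $Z$ makes the retraction the identity on $|\K(\HA,\s)|$.

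For the affine case, $\rho\colon\R^d\to\OD$ is a homeomorphism sending $\HA$ to $\HA'\cap\OD$, so $\HA^\comp\cong\OD\setminus\HA'$. If $\partial\OD$ is not already among the hyperplanes of the extension $\EH$ defining $\s$, I would first enlarge $\EH$ by $\partial\OD$, which only refines $\s$ and affects neither the homotopy type of the complement nor that of $\RS^\ast(\s^\ast,\OD\setminus\HA')$. With this adjustment every $\s$-face lies either in $\OD$, in $\partial\OD$, or in the opposite open hemisphere, and the barycentric retraction above, performed inside $\OD$, lands in $|\RS^\ast(\s^\ast,\OD\setminus\HA')|$. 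I expect this affine bookkeeping to be the main obstacle, since one must argue that enlarging $\EH$ by $\partial\OD$ does not alter the complement complex up to homotopy equivalence, so that the claimed model is indeed a well-defined homotopy invariant of the arrangement.
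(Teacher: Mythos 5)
The paper does not prove this lemma at all: it is imported wholesale from Bj\"orner--Ziegler \cite[Prp.\ 3.1]{BjZie}, so there is no internal proof to compare against. Your spherical argument is essentially the standard one (and essentially the one in the cited source): identify $\K(\HA,\s)$ with the union of open dual cells $\rint\sigma^\ast$ over the up-set $Z$ of faces not contained in $\HA$, and push points of $S^d{\setminus}\HA$ into it by zeroing out the barycentric coordinates on barycenters of faces lying in $\HA$. That part is correct, with one point you should not wave at: the claim that every open stratum is either contained in $\HA$ or disjoint from it does not follow from the literal definition of a sign extension in this paper (one hyperplane $H_i\supset h_i$ per subspace does not cut out $h_i$, so a stratum inside $H_i$ could cross $h_i$ transversally). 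You need the Bj\"orner--Ziegler convention that the $h_i$ themselves are among the stratifying subspaces --- the paper's phrase ``$h_i$ itself extends $h_i$'' is the hook for this --- and you should say so explicitly, since without it $\RS(\s,\HA)$ need not have underlying space $\HA$ and the retraction has nowhere to land.

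The affine case is where the genuine gap sits, and you have located it but not closed it. Enlarging $\EH$ by $\partial\OD$ produces a \emph{different} combinatorial stratification and hence a \emph{different} complement complex; the assertion that this ``affects neither the homotopy type of the complement nor that of $\RS^\ast(\s^\ast,\OD{\setminus}\HA')$'' is not affine bookkeeping, it is an instance of the very independence statement the lemma encodes (all complement complexes of $\HA$ are models for the same space), so invoking it is circular as written. Two honest ways out: either run the barycentric retraction directly on the original $\s$, observing that $|\RS^\ast(\s^\ast,\OD{\setminus}\HA')|=\bigsqcup\rint\sigma^\ast$ over the up-set $W$ of faces meeting $\OD{\setminus}\HA'$ and that the open union $U=\bigsqcup_{\sigma\in W}\rint\sigma$ retracts onto it, and then separately prove that the inclusion $\OD{\setminus}\HA'\hookrightarrow U$ is a homotopy equivalence (this is where convexity of strata and a genericity or radial-pushing argument for $\partial\OD$ must enter, since a stratum in $W$ may extend far into the closed complementary hemisphere); or exhibit an explicit collapse from the $\partial\OD$-refined complement complex onto the original one. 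Either route requires a real argument that your proposal currently replaces with an assertion.
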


\subsection{Outwardly matched faces of subcomplexes; out-{\em j} collapses} \label{ssc:relcollapse}

Here, we introduce the notion of outward matchings. We will see in the next section that this notion allows us to define a rudimentary Alexander duality for Morse matchings, which is crucial to our proofs.

\begin{definition}[Outwardly matched faces]
Let $C$ be a regular CW complex. Let $D$ be a non-empty subcomplex. Consider a Morse matching $\varPhi$ on $C$. A face $\tau$ of $D$ is \Defn{outwardly matched} with respect to the pair $(C,D)$ if it is matched with a face that does not belong to $D$.
\end{definition}

\begin{definition}[Out-$j$ collapse of a pair, Out-$j$ collapsibility]
Let $C$ be a regular CW complex. Let $D$ be a subcomplex. Suppose that $C$ collapses onto a subcomplex $C'$. The pair $(C,  D)$ \Defn{out-$j$ collapses} to the pair $(C', D \cap C')$, and we write \[(C, D) \searrow_{\textrm{out-}j}  (C', D \cap C' ),\]
if the collapsing sequence that reduces $C$ to $C'$ can be chosen so that every outwardly matched face with respect to the pair $(C,D)$ has dimension $j$.

We say that the pair $(C, D)$ is \Defn{out-$j$ collapsible} if there is a vertex $v$ of $D$ such that \[(C, D) \searrow_{\textrm{ out-}j} (v, v).\] For any integer $j$, the pair $(C, \emptyset)$ is \Defn{out-$j$ collapsible} if $C$ is collapsible. A collapsing sequence demonstrating an out-$j$ collapse is called an \Defn{out-$j$ collapsing sequence.}
\end{definition}

\begin{example} \label{ex:out}
Let $C$ be a collapsible complex. 
\begin{compactenum}[(a)]
\item The pairs $(C, C)$ and $(C, \emptyset)$ are out-$j$ collapsible for any $j$.
\item If $D$ is the $k$-skeleton of $C$, with $0 \le k < \dim C$, then $(C, D)$ is out-$k$ collapsible. 
\item If $D$ is any triangulation of the Dunce hat (cf.~\cite{Zeeman}) that is a subcomplex of $C$, then $(C, D)$ is not out-$j$ collapsible for any $j$, cf.\ Proposition~\ref{prp:relind}.
\end{compactenum}
\end{example}

\begin{prp}\label{prp:relind}
Let $(C,D)$ be an out-$j$ collapsible pair, with $D$ non-empty.  
The number of outwardly matched $j$-faces of $D$ is independent of the out-$j$ collapsing sequence chosen, and equal to 
$(-1)^j \cdot ( \chi(D) - 1 )$, where $\chi$ denotes the Euler characteristic. Moreover, the following are equivalent:
\begin{compactenum}[\rm (1)]
\item $D$ is contractible;
\item There exists a collapsing sequence that has no outwardly matched faces with respect to the pair $(C,D)$;
\item $D$ is collapsible.
\end{compactenum}
\end{prp}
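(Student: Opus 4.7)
The plan is to extract an Euler-characteristic identity from any out-$j$ collapsing sequence for $(C,D)$, and then to leverage that identity to settle all three equivalences almost for free.

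Fix an out-$j$ collapsing sequence $\varPhi$ witnessing $(C,D)\searrow_{\textrm{out-}j}(v,v)$. Because $D$ is a subcomplex of $C$, whenever $\Sigma\in D$ its codimension-one face $\sigma$ lies in $D$ as well; hence the matched pairs of $\varPhi$ meeting $D$ split into two disjoint types: \emph{interior} pairs (both faces in $D$) and \emph{outwardly matched} pairs (with $\sigma\in D$ and $\Sigma\notin D$), the latter all having $\dim\sigma=j$ by hypothesis. The only unmatched face of $D$ is $v$. Writing $m$ for the number of outwardly matched $j$-faces and $a_k$ for the number of interior pairs of type $(k,k+1)$, I compute
\[
\chi(D)\;=\;\sum_{i\ge 0}(-1)^i f_i(D)\;=\;1+\sum_k a_k\bigl((-1)^k+(-1)^{k+1}\bigr)+m\,(-1)^j\;=\;1+(-1)^j m,
\]
which rearranges to $m=(-1)^j(\chi(D)-1)$. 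Since the right-hand side depends only on $D$, this establishes the independence claim.

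For the three equivalences, $(3)\Rightarrow(1)$ is classical, since each elementary collapse is a deformation retract; and $(1)\Rightarrow(2)$ is then immediate from the identity: if $D$ is contractible then $\chi(D)=1$, forcing $m=0$, so the given out-$j$ collapsing sequence already has no outwardly matched faces. The only implication requiring real care is $(2)\Rightarrow(3)$, and here the plan is to restrict the ambient $C$-collapsing sequence to $D$. Concretely, given a collapsing sequence from $C$ to $v$ with no outward matches, I claim that the subsequence of matched pairs lying inside $D$, taken in the induced order, is itself a valid elementary collapsing sequence $D\searrow\{v\}$. At the moment a pair $(\sigma,\Sigma)$ with $\sigma,\Sigma\in D$ is processed in $C$, the face $\sigma$ is free in the current subcomplex $C_t\subseteq C$, i.e.\ $\Sigma$ is the unique remaining coface of $\sigma$ in $C_t$. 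Every coface of $\sigma$ inside $D$ is in particular a coface in $C$, and any such coface that has already been removed must have been removed via an interior pair (there are no outward matches), hence is absent from $D_t:=D\cap C_t$ as well. Thus $\Sigma$ is the unique coface of $\sigma$ in $D_t$, and the elementary collapse is legal inside $D$.

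The only real obstacle is this last verification: ensuring that the restriction of the collapsing sequence to $D$ remains a sequence of valid elementary collapses. The subcomplex hypothesis on $D$ combined with the absence of outward matches is precisely what prevents a phantom coface of $\sigma$ from surviving in $D_t$ after having been removed in $C_t$, and this is the one place where a genuine argument is needed; the remainder of the proposition is bookkeeping with $\chi$.
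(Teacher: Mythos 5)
Your proposal is correct and follows essentially the same route as the paper: the same partition of the faces of $D$ into interior pairs, outward matches, and the terminal vertex to derive $\chi(D)=1+(-1)^j m$, and the same cycle of implications $(1)\Rightarrow(2)\Rightarrow(3)\Rightarrow(1)$, with $(2)\Rightarrow(3)$ proved by restricting the ambient collapsing sequence to $D$. Your added verification that each interior pair remains a legal elementary collapse inside $D_t=D\cap C_t$ is a welcome elaboration of a step the paper states in one sentence, and it is sound since the cofaces of a free face $\sigma$ in $D_t$ form a subset of its cofaces in $C_t$.
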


\begin{definition}
With the above notation, if one of the conditions (1), (2), or (3) is satisfied, we say that $(C,D)$ is a \Defn{collapsible pair}. The pair $(C,\emptyset)$ is a \Defn{collapsible pair} if $C$ is collapsible. 
\end{definition}

\begin{proof}[\textbf{Proof of Proposition~\ref{prp:relind}}] Fix an out-$j$ collapsing sequence for $(C,D)$. Let $O$ be the set of outwardly matched faces of $D$. Let $v$ be the vertex onto which $C$ collapses. Let $Q$ be the set of the faces of $D$ that are matched together with another face of $D$. Clearly, the sets $O, \{v\}, Q$ form a partition of the set of all faces of $D$. We set
\[f_i := \# \{ \textrm{$i$-faces of $D$} \}, 
\quad o_i := \# \{ \textrm{$i$-faces in $O$} \}, 
\quad q_i := \# \{ \textrm{$i$-faces in $Q$} \}.\]
Clearly, $f_0 = o_0 + 1 + q_0$, and $f_i = o_i + q_i$ for $i \ge 1$. By definition of out-$j$ collapsibility, $o_i = 0$ unless~$i=j$. In particular, $\sum (-1)^i o_i = (-1)^j o_j$.
Now, faces matched together in a collapsing sequence must have consecutive dimension. It follows (by pairwise canceling) that $\sum (-1)^i q_i = 0$. Hence, 
\[\chi(D) \: = \; \sum (-1)^i f_i \; = \; 1 + \sum (-1)^i o_i + \sum (-1)^i q_i \; = \: 1 + (-1)^j o_j.\]
Hence $o_j = (-1)^j \cdot ( \chi(D) - 1 )$ and the first claim is proven. For the second part: 
\begin{compactitem}[$\circ$]
\item \makebox[4.4em][l]{(1) $\Rightarrow$ (2)}: If $D$ is contractible, $\chi(D) =1$. By the formula above, $o_j = 0$.
\item \makebox[4.4em][l]{(2) $\Rightarrow$ (3)}: By assumption, there is a collapsing sequence for $C$ which removes all faces of $D$ in pairs. Consider the restriction to $D$ of the collapsing sequence for $C$; this yields a collapsing sequence for $D$.
\item \makebox[4.4em][l]{(3) $\Rightarrow$ (1)}: This is implied by the fact if $D\searrow D'$, then $D'$ is a deformation retract of $D$. \qedhere
\end{compactitem}
\end{proof}

We also have the following elementary Lemma.

\begin{lemma}\label{lem:outcoll}
Let $C$ and $D$, $D \subset C$, be regular CW complexes, and let $v$ be any vertex of $C$. Assume that $v\notin D$ or $\Lk(v,D)$ is nonempty, and that $(\Lk(v,C),\Lk(v,D))$ is out-$j$ collapsible. Then $(C,D)$ out-$(j+1)$ collapses to $(C-v,D-v)$. In particular, if in this situation $(\Lk(v,C),\Lk(v,D))$ is a collapsible pair, then $(C,D)$ out-$j$ collapses to $(C-v,D-v)$ for every $j$.

If on the other hand $v\in D$ but $\Lk(v,D)$ is empty, then $(C,D)$ out-$0$ collapses to $(C-v,D-v)$ if and only if $\Lk(v,C)$ is collapsible. \qed
\end{lemma}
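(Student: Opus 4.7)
The proof rests on a natural bijection $\sigma \mapsto \widehat{\sigma}$ between faces of $\Lk(v,C)$ and the faces of $\St(v,C)$ strictly containing $v$: $\widehat{\sigma}$ is the unique face of $C$ whose boundary contains both $v$ and $\sigma$. Codimension-one relations are preserved by this bijection. Moreover, any face of $C$ having $\widehat{\sigma}$ on its boundary must itself contain $v$, and so is of the form $\widehat{\tau}$ for some $\tau$ strictly above $\sigma$ in $\Lk(v,C)$; consequently the faces of $C$ lying above $\widehat{\sigma}$ correspond bijectively to the faces of $\Lk(v,C)$ lying above $\sigma$. It follows that any valid sequence of elementary collapses $(\sigma, \Sigma)$ on a subcomplex of $\Lk(v,C)$ lifts to a valid sequence $(\widehat{\sigma}, \widehat{\Sigma})$ on $C$, and conversely. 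We also record that $\widehat{\sigma} \in D$ if and only if $\sigma \in \Lk(v,D)$, which holds by the definition of the link and forces $v \in D$ whenever $\Lk(v,D)$ is nonempty.

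For the first part of the lemma, I take an out-$j$ collapsing sequence witnessing $(\Lk(v,C), \Lk(v,D)) \searrow_{\textrm{out-}j} (w,w)$, lift it to $C$, and append the elementary collapse $(v, \widehat{w})$; together these pairs exhaust the open star of $v$ and so witness $C \searrow C - v$. A lifted pair $(\widehat{\sigma}, \widehat{\Sigma})$ is outwardly matched with respect to $(C,D)$ exactly when $(\sigma, \Sigma)$ is outwardly matched with respect to $(\Lk(v,C), \Lk(v,D))$, and their dimensions are shifted up by $1$. The final pair $(v, \widehat{w})$ is never outward: either $v \notin D$, in which case no face of the open star lies in $D$ at all, or $v \in D$, in which case $w \in \Lk(v,D)$ gives $\widehat{w} \in D$. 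This proves out-$(j+1)$ collapsibility. The ``in particular'' clause follows by instead selecting a collapsing sequence for $(\Lk(v,C), \Lk(v,D))$ with \emph{no} outwardly matched faces, guaranteed by Proposition~\ref{prp:relind} when the pair is collapsible; the lifted sequence then has no outwardly matched faces in $(C,D)$, and so is out-$k$ for every $k$.

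For the last statement, assume $v \in D$ with $\Lk(v,D) = \emptyset$, so that $v$ is isolated in $D$. The forward direction copies the argument above, except that now $\widehat{w} \notin D$, so $v$ itself becomes the unique outwardly matched face, of dimension $0$. Conversely, suppose $(C,D)$ out-$0$ collapses to $(C-v, D-v)$. Each removed pair lies entirely in the open star of $v$; one such pair contains $v$, necessarily of the form $(v, \widehat{w_0})$ for some vertex $w_0 \in \Lk(v,C)$, and the remaining pairs project via $\widehat{\sigma} \mapsto \sigma$ to a sequence of elementary collapses reducing $\Lk(v,C)$ to $w_0$. The delicate point, which I expect to be the main obstacle, is verifying that the projected sequence is a \emph{valid} collapsing sequence in the correct order; this reduces to checking that $\widehat{\sigma}$ is a free face of the current subcomplex of $C$ if and only if $\sigma$ is a free face of the current subcomplex of $\Lk(v,C)$, which follows directly from the upper-face correspondence set up in the first paragraph.
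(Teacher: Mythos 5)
The paper gives no proof of this lemma at all --- it is stated with a terminal \qed{} as ``elementary'' --- so there is nothing to compare against except the intended folklore argument, and your write-up is exactly that argument, carried out correctly: the bijection $\sigma\mapsto\widehat\sigma$ between faces of $\Lk(v,C)$ and faces of $C$ properly containing $v$ preserves the cover relations upward, so collapsing sequences lift and project faithfully, outwardly matched faces correspond with a dimension shift of $1$, and the terminal pair $(v,\widehat w)$ is outward precisely in the isolated-vertex case, which yields both the out-$(j+1)$ statement, the ``collapsible pair'' refinement via Proposition~\ref{prp:relind}, and both directions of the final equivalence. No gaps; this is a complete proof of the omitted lemma.
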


\subsection{Complement matchings}\label{ssc:cmpm}

Let $\s$ denote a combinatorial stratification of the sphere $S^d$. As such, $\s$ is necessarily a closed PL manifold. To obtain Morse matchings on the complement complex $\K:=\K(\HA,\s)$, we first study Morse matchings on the stratification $\s$. Via duality, Morse matchings on $\s$ will then give rise to Morse matchings on $\K$. To explain the details of this idea is the purpose of this section.

\begin{definition}[Restrictions of matchings]
Let $\varPhi$ be a Morse matching on a regular CW complex $C$, and let $D$ be a subcomplex of $C$. Let us denote by $\varPhi_D$ the \Defn{restriction} of $\varPhi$ to $D$, that is, the collection of all matching pairs in $\varPhi$ involving two faces of $D$. 
\end{definition}

\begin{example}[Complement matching]\label{ex:cm}
Let $\HA$ be a codim-$\cc$-arrangement in $S^d$, and let $\s$ denote a combinatorial stratification of $S^d$ induced by it. Let $\K:=\K(\HA,\s)$ denote the associated complement complex. Consider the dual Morse matching $\varPhi^\ast$ on $\s^\ast$ of a Morse matching $\varPhi$ defined on $\s$.  The Morse matching $\varPhi^\ast$ has an outwardly matched face $\varSigma^\ast$ (matched with a face $\sigma^\ast$) with respect to the pair $(\s^\ast,\K)$ for every outwardly matched face $\sigma$ (matched with $\varSigma$) of $\varPhi$ with respect to the pair $(\s,\RS(\s,\HA))$. After we remove all such matching pairs from $\varPhi^\ast$, we are left with a Morse matching on $\s^\ast$ that has no outwardly matched faces with respect to the pair~$(\s^\ast,\K)$. If we furthermore remove all matching pairs that only involve faces of $\s^\ast$ not in $\K$, we obtain a Morse matching on the complex $\K$, the \Defn{complement matching}~$\varPhi^{\ast}_{\K}$ induced by $\varPhi$.
\end{example}

\begin{figure}[htbf]
\centering 
 \includegraphics[width=0.60\linewidth]{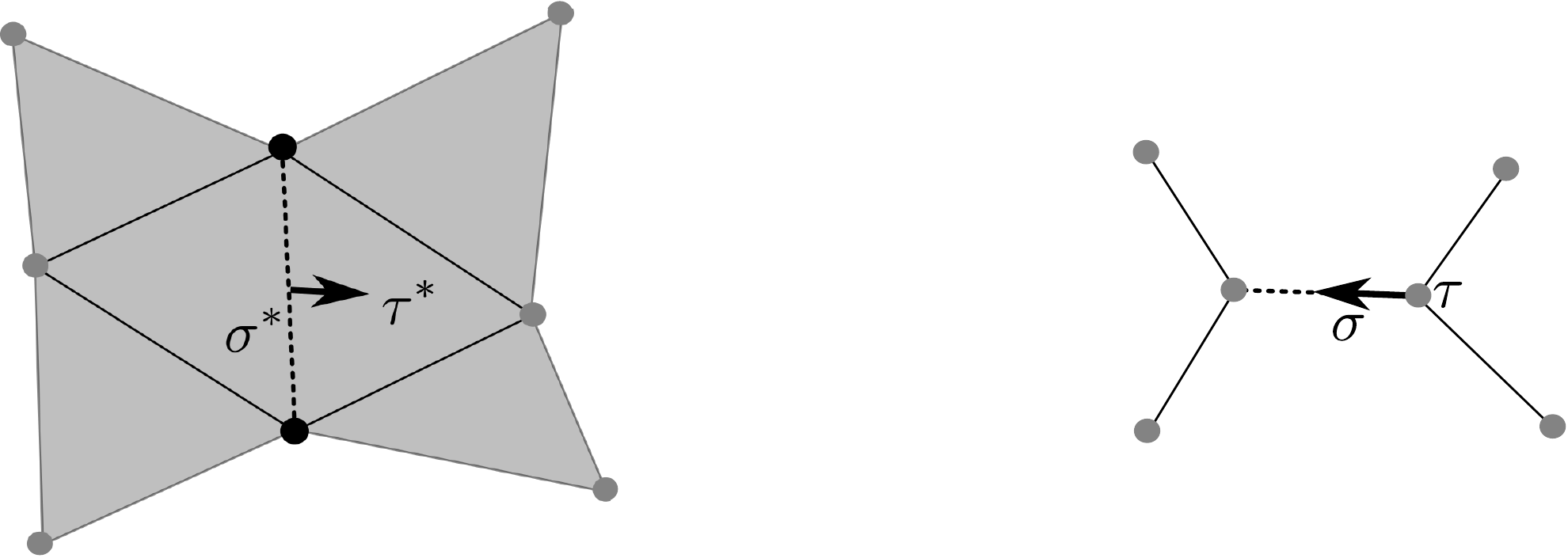} 
 \caption{\small An outward matching $(\varSigma^\ast,\sigma^\ast)$ of the pair $(\s^\ast,\K)$ corresponds to an outward matching $(\sigma,\varSigma)$ of the pair $(\s,\RS(\s,\HA))$.}
\label{fig:outward}
\end{figure}

The complement matching of a Morse matching is again a Morse matching. This allows us to study Morse matchings on the complement complex by studying Morse matchings on the stratification itself. The following theorem can be seen as a very basic Alexander duality for Morse matchings.

\begin{theorem}\label{thm:relcoll}
Let $\s$, $\HA$ and $\K$ be given as in Example~\ref{ex:cm}. Consider a Morse matching $\varPhi$ on $\s$. Then the critical $i$-faces of $\varPhi^{\ast}_{\K}$ are in one-to-one correspondence with the union of
\begin{compactitem}[$\circ$]
\item the critical $(d-i)$-faces of $\varPhi$ that are not faces of $\RS(\s,\HA)$, and
\item the outwardly matched $(d-i-1)$-faces of $\varPhi$ with respect to the pair $(\s,\RS(\s,\HA))$.
\end{compactitem}
\noindent If $M$ is furthermore an open subset of $S^d$ such that all noncritical faces of $\varPhi$ intersect $M$, then the critical $i$-faces of $\varPhi^{\ast}_{\RS^\ast(\K,M)}$ are in bijection with the union of
\begin{compactitem}[$\circ$]
\item the critical $(d-i)$-faces of $\varPhi$ that are not faces of $\RS(\s,\HA)$, and that intersect $M$, and
\item the outwardly matched $(d-i-1)$-faces of $\varPhi$ with respect to the pair $(\s,\RS(\s,\HA))$.
\end{compactitem}
\end{theorem}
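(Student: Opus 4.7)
The plan is a systematic accounting of how critical faces and matching pairs of $\varPhi$ transform under dualisation and restriction to $\K$ (and, in the second part, further to $\RS^\ast(\K,M)$). I begin from Theorem~\ref{thm:dual}, which provides the dual Morse matching $\varPhi^\ast$ on $\s^\ast$ and a bijection $\sigma\mapsto\sigma^\ast$ between critical faces of $\varPhi$ and $\varPhi^\ast$; each pair $(\sigma,\varSigma)\in\varPhi$ with $\sigma<\varSigma$ becomes a pair $(\varSigma^\ast,\sigma^\ast)\in\varPhi^\ast$ in which $\varSigma^\ast$ is the codimension-one subface of $\sigma^\ast$. The structural fact that drives everything is: $\sigma^\ast\in\K$ iff $\sigma\not\subset\HA$, and if $\sigma<\varSigma$ in $\s$ with $\sigma\not\subset\HA$ then $\varSigma\not\subset\HA$ as well, since $\sigma\subset\varSigma$. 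Consequently every pair $(\varSigma^\ast,\sigma^\ast)\in\varPhi^\ast$ falls into one of three types: (I) both $\sigma,\varSigma\subset\HA$, so the pair lies outside $\K$ and is discarded; (II) $\sigma\subset\HA$ but $\varSigma\not\subset\HA$, so $\varSigma^\ast\in\K$ is outwardly matched with $\sigma^\ast\notin\K$, mirroring on the primal side the outward matching of $\sigma\in\RS(\s,\HA)$ with $\varSigma\notin\RS(\s,\HA)$; or (III) both $\sigma,\varSigma\not\subset\HA$, in which case the pair survives in $\varPhi^\ast_{\K}$. This simultaneously establishes the outward-matching bijection of Figure~\ref{fig:outward} and shows that $\varPhi^\ast_{\K}$ is a Morse matching, since any closed gradient path in it would lift to one in $\varPhi^\ast$.

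To enumerate the critical $i$-faces of $\varPhi^\ast_{\K}$, I fix $\tau^\ast\in\K$ (so $\dim\tau=d-i$ and $\tau\not\subset\HA$) and split on how $\tau$ sits in $\varPhi$. If $\tau$ is critical in $\varPhi$, then $\tau^\ast$ is critical in $\varPhi^\ast$ and hence in $\varPhi^\ast_{\K}$. If $\tau$ is matched with a coface $\alpha\supset\tau$, then $\alpha\not\subset\HA$ automatically, the pair is of type (III), and $\tau^\ast$ remains matched. If $\tau$ is matched with a subface $\beta\subset\tau$, the pair $(\tau^\ast,\beta^\ast)$ is of type (III) when $\beta\not\subset\HA$ (leaving $\tau^\ast$ matched) and of type (II) when $\beta\subset\HA$ --- only the latter case releases $\tau^\ast$ as a critical face, paired bijectively with the outwardly matched $\beta$ of dimension $\dim\tau-1=d-i-1$. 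The two sources together yield exactly the asserted description.

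For the $M$-refinement, the single new ingredient is the identification $\tau^\ast\in\RS^\ast(\K,M)$ iff $\tau\not\subset\HA$ and $\tau\cap M\neq\emptyset$; this follows from the order reversal $\tau^\ast\leq\sigma^\ast\Leftrightarrow\sigma\leq\tau$ under duality, combined with the impossibility of $\sigma\subset\tau\subset\HA$ when $\sigma\not\subset\HA$. Under the hypothesis that every noncritical face of $\varPhi$ meets $M$, any type-(III) pair $(\alpha^\ast,\tau^\ast)$ in $\varPhi^\ast_{\K}$ has both $\alpha,\tau$ noncritical, so both meet $M$ and hence both of $\alpha^\ast,\tau^\ast$ lie in $\RS^\ast(\K,M)$; the pair is retained in $\varPhi^\ast_{\RS^\ast(\K,M)}$. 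Re-running the case analysis, the critical faces from the first source survive only when additionally $\tau\cap M\neq\emptyset$, while those from the outward-matching source always persist because their partner $\tau$ is noncritical and automatically meets $M$. The only genuine subtlety in the whole argument is the order reversal under duality and the resulting need to track carefully which end of a pair is ``outward'' in $\s$ versus $\s^\ast$; once this is fixed by the three-type classification above, the rest is a purely formal case check.
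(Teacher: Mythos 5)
Your proof is correct and follows essentially the same route as the paper's: both rest on the duality of Morse matchings (Theorem~\ref{thm:dual}) together with the correspondence between outwardly matched faces of $(\s,\RS(\s,\HA))$ and of $(\s^\ast,\K)$ set up in Example~\ref{ex:cm}. The only difference is one of detail --- your three-type classification of dual matching pairs explicitly re-derives the content of Example~\ref{ex:cm}, and you spell out the $M$-refinement that the paper dispatches with ``obtained analogously''; both of these elaborations check out.
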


\begin{proof} Both bijections are natural: Each critical $i$-face of $\varPhi^{\ast}_{\K}$ corresponds to either a critical $i$-face of $\varPhi^\ast$ in $\K$ or to an outwardly matched $i$-face of $\varPhi^\ast$ with respect to the pair $(\s^\ast, \K)$. Now, 
\begin{compactitem}[$\circ$]
\item critical $i$-faces of $\varPhi^\ast$ in $\K$ are in one-to-one correspondence with the critical $(d-i)$-faces of $\varPhi$ that are not faces of $\RS(\s,\HA)$ (cf.\ Theorem~\ref{thm:dual}), and
\item the outwardly matched $i$-faces of $\varPhi^\ast$ with respect to the pair $(\s^\ast, \K)$, are in one-to-one correspondence with the outwardly matched $(d-i-1)$-faces of $\varPhi$ with respect to the pair  $(\s,\RS(\s,\HA))$ (cf.\ Example~\ref{ex:cm}).
\end{compactitem}
This gives the desired bijection for the critical faces of $\varPhi^{\ast}_{\K}$. The bijection for the critical faces of $\varPhi^{\ast}_{\RS^\ast(\K,M)}$ is obtained analogously.\end{proof} 

\subsection{Discrete Morse theory and duality, II: A strong version of the Morse Theorem}\label{sec:morsethm}

The notions introduced in the past sections allow us to state the following stronger version of Theorem~\ref{thm:MorseThmw}. Recall that a \Defn{$k$-cell}, or \Defn{cell of dimension $k$} is just a $k$-dimensional open ball. We say that \Defn{a cell $B$ of dimension $k$ is attached to a topological space $X$} if $B$ and $X$ are identified along an inclusion of $\partial B$ into~$X$.

\begin{theorem}[Forman {\cite[Thm.\ 3.4]{FormanADV}}]\label{thm:MorseThm}
Let $C$ be a regular CW complex, and let $D$ denote any subcomplex. Let $\varPhi$ denote a Morse matching on $C$ that does not have any outwardly matched faces with respect to the pair $(C,D)$. Then $C$ is up to homotopy equivalence obtained from $D$ by attaching one cell of dimension $k$ for every critical $k$-face of $\varPhi$ not in $D$.
\end{theorem}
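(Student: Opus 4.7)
The plan is to reduce this relative statement to Forman's absolute level-subcomplex theorem by exhibiting $D$ as a sublevel set of a discrete Morse function compatible with $\varPhi$, after which the conclusion follows by walking the sublevel set from $D$ up to all of $C$.

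\emph{Splitting along $D$.} The first observation is that the no-outward-matching hypothesis, combined with $D$ being a subcomplex, forces every matching pair $(\sigma,\varSigma)\in\varPhi$ to lie either entirely in $D$ or entirely in $C\setminus D$: having $\varSigma\in D$ implies $\sigma\in D$ since $\sigma\subset\varSigma$, and having $\sigma\in D$ implies $\varSigma\in D$ by hypothesis. Consequently, the partial order $\prec$ on $\F(C)$ generated by (a) $\alpha\prec\beta$ whenever $\alpha$ is a non-matched codimension-one face of $\beta$, and (b) $\varSigma\prec\sigma$ whenever $(\sigma,\varSigma)\in\varPhi$, has $D$ as a downward-closed subset. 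Acyclicity of $\prec$ holds because $\varPhi$ has no closed gradient paths.

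\emph{Building $f$ and applying the absolute theorem.} I would then extend $\prec$ to a total order on $\F(C)$ that places every face of $D$ strictly before every face of $C\setminus D$; this is possible precisely because $D$ is downward closed. Forman's standard construction (a total-order-compatible assignment of real values, with matched pairs collapsed to a common value) converts this data into a discrete Morse function $f\colon\F(C)\to\R$ inducing $\varPhi$, whose sublevel complex $C_{\le t_0}:=\bigcup\{\bar\alpha : f(\alpha)\le t_0\}$ equals $D$ for a suitable $t_0$. I would then invoke Forman's absolute level-subcomplex theorem \cite[Thms.~3.3, 3.4]{FormanADV}: between consecutive critical values of $f$ the sublevel complex changes only by a collapse (hence a homotopy equivalence), while crossing the critical value of a critical $k$-face $\tau$ attaches one $k$-cell along an image of $\partial\tau$. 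Since every critical face of $\varPhi$ outside $D$ has $f$-value $>t_0$, passing from $C_{\le t_0}=D$ to $C=C_{\le\infty}$ attaches exactly one $k$-cell for each critical $k$-face of $\varPhi$ not in $D$, which is the desired conclusion.

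\emph{Main obstacle.} The delicate point is the consistency of the total order used to build $f$. Without the no-outward-matching hypothesis, a pair $(\sigma,\varSigma)\in\varPhi$ straddling $D$ would introduce the relation $\varSigma\prec\sigma$ with $\sigma\in D$ and $\varSigma\notin D$, destroying the downward closure of $D$ and preventing $D$ from being realized as a sublevel set of any matching-compatible $f$. The hypothesis is exactly what prevents gradient paths from emerging out of $D$ and what allows the Morse flow on $C$ to see the pair $(C,D)$ cleanly; once this is established, the relative statement is just Forman's absolute theorem applied in the window $t>t_0$.
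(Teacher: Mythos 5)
Your proposal is correct, and it rests on the same key observation as the paper's argument: the hypothesis that $\varPhi$ has no outwardly matched faces with respect to $(C,D)$ forces every matching pair to lie entirely inside $D$ or entirely outside it, so that $D$ is downward closed for the matching-modified face order --- in the paper's language, the contracted vertex $v_D$ is a sink of the acyclic digraph $G_{\varPhi}(C)\cdot D$. Where you diverge is in how the conclusion is extracted. You take a linear extension of the modified order that places all of $D$ first, package it as a genuine discrete Morse function $f$ with $D$ as a sublevel complex, and then quote Forman's absolute sublevel theorems to walk from $D$ up to $C$. The paper instead stays entirely in the language of Morse matchings: it repeatedly locates a source of $G_{\varPhi_i}(C_i)\cdot D$ other than $v_D$, and removes the corresponding face either by an elementary collapse (if matched) or as a top-dimensional cell attachment (if critical), arguing the homotopy statement directly at each step. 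The two mechanisms are equivalent --- finding a source repeatedly is how one builds a linear extension --- but your route buys a shorter argument at the cost of invoking Forman's function-level machinery as a black box, while the paper's route is self-contained and matches the matching-theoretic vocabulary used everywhere else in the text. One small point to make explicit if you write this up: the discrete Morse function produced from the total order must be checked to satisfy Forman's defining inequalities (at most one exceptional coface/face per cell), which is standard for acyclic matchings but is the step where the no-closed-gradient-path hypothesis is actually consumed.
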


\begin{proof}
Theorem 3.4 of Forman in~\cite{FormanADV} treats the case where $D$ contains all but one of the critical faces of $\varPhi$; this is clearly sufficient to prove the statement. For the sake of completeness, we sketch a reasoning using the language of Morse matchings here, inspired by Chari's proof of Theorem~\ref{thm:MorseThmw}~\cite[Thm.~3.1]{Chari}. Assume that $D$ is a strict subcomplex of $C:=C_0$. Set $\varPhi_0:=\varPhi$ and $i:=0$.

\medskip

\noindent {\bf Deformation process} Let $G(C_i)$ denote the Hasse diagram of $\mathcal{P}(C_i)$, i.e.\ let $G(C_i)$ be the graph
\begin{compactitem}[$\circ$]
\item whose vertices are the nonempty faces of $C_i$, and for which
\item two faces $\tau$, $\sigma$ are connected by an edge, directed from $\sigma$ to $\tau$, if and only if $\tau$ is a facet of $\sigma$. 
\end{compactitem}
We manipulate $G(C_i)$ to a directed graph $G_{\varPhi_i} (C_i)$ as follows: 

\begin{quote}
\noindent \emph{For every matching pair $(\sigma, \varSigma)$ of $\varPhi_i$, replace the edge directed from $\varSigma$ to $\sigma$ by an edge directed from $\sigma$ to $\varSigma$.}
\end{quote}
Finally, contract the vertices of $G_{\varPhi_i} (C_i)$ corresponding to $D$ to a single vertex, obtaining the directed graph $G_{\varPhi_i} (C_i)\cdot D$. Let $v_D$ denote the vertex corresponding to $D$ in that graph. Since $\varPhi_i$ contains no closed gradient path, and $(C_i,D)$ has no outwardly matched face with respect to $\varPhi_i$, the directed graph $G_{\varPhi_i} (C_i)\cdot D$ is \Defn{acyclic} (i.e.\ it contains no directed cycle) and $v_D$ is a \Defn{sink} (i.e.\ every edge of the graph that contains $v_D$ points towards it).

Consequently, $G_{\varPhi_i} (C_i)\cdot D$ has a \Defn{source} that is not $v_D$, i.e.\ a vertex such that every edge containing it points away from it. Indeed, to find a source, pick any vertex $x$ of $G_{\varPhi_i} (C_i)\cdot D$ that is not $v_D$ (since $v_D$ might be isolated), and pass to any vertex $y$ connected to $x$ if the edge between them is directed from $y$ to $x$; repeating this procedure will lead us to the desired source since $G_{\varPhi_i} (C_i)\cdot D$ is acyclic. 

The source vertex corresponds to a face $\sigma$ of $C_i$ not in $D$, which, since it is a source, must satisfy one of the following properties:
\begin{compactenum}[(1)]
\item there exists a face $\varSigma$ of $C_i$ such that $(\sigma,\varSigma)$ is a matching pair of $\varPhi_i$, or 
\item $\sigma$ is a critical face of $\varPhi_i$.
\end{compactenum}
In case $\sigma$ satisfies (1), $\sigma$ is a free face of $C_i$, and $C_i$ elementarily collapses to $C_i-\sigma$; in particular, $C_i$ is homotopy equivalent to $C_i-\sigma$. In case $\sigma$ satisfies (2), $\sigma$ is a facet of $C_i$: in particular, $C_i$ is obtained from $C_i-\sigma$ by attaching a cell of dimension $\dim \sigma$. 

Now, set $C_{i+1}:=C_i-\sigma$ and
\[
\varPhi_{i+1}:= \left\{ \begin{array}{ll}\varPhi_i \setminus \{(\sigma,\varSigma)\}&\text{ in case $\sigma$ satisfies (1) and}\\
\varPhi_i                             & \text{ in case $\sigma$ satisfies (2).}
\end{array}
\right.
\]
With this definition, we have that  
\begin{equation}\tag{$\ast$} \label{eq:cr}
c_k(\varPhi_{i+1})= \left\{ \begin{array}{ll}c_k(\varPhi_i) &\text{ if $\sigma$ satisfies (1) or $k\neq \dim \sigma$ and }\\
c_k(\varPhi_i)-1                            & \text{ if $\sigma$ satisfies (2) and $k=\dim \sigma$.}
\end{array}
\right.
\end{equation}
If $C_{i+1}=D$, stop the deformation process. If $C_{i+1}\neq D$, increase $i$ by one and repeat from the start.

\medskip

The homotopical characterization of how to obtain $C_{i+1}=C_i-\sigma$ from $C_{i}$, together with Equation~\eqref{eq:cr}, gives the desired presentation for $C$ from $D$.
\end{proof}

\section{Restricting stratifications to general position hemispheres}

In this section, we study Morse matchings on combinatorial stratifications of $S^d$. More precisely, we study Morse matchings on the restrictions of stratifications to a hemisphere. The main result of this section is Theorem~\ref{thm:hemisphere}, which will turn out to be crucial in order to establish Main Theorem~\ref{MTHM:BZP}. 

If $\HA$ is a subspace arrangement in $S^d$, and $H$ is a subspace of $S^d$, we define \[\HA^H:=\{h\cap H: h \in \HA\}.\] 
We use $\RN^1_p X$ to denote the subset of unit vectors in the tangent space of $X$ at $p$. If $\HA$ is a subspace arrangement in $S^d$, we define the \Defn{link} $\Lk(p,\HA)\subset \RN^1_p S^d$ of $\HA$ at $p$ by \[\Lk(p,\HA):=\{\RN^1_p h: h \in \HA,\ h\cap p\ne \emptyset\}.\] 
Similarly, if $C$ is a subcomplex of a combinatorial stratification of $S^d$, and $v$ is a vertex of $C$, the \Defn{link} $\Lk(v,C)\subset \RN^1_v S^d$ of $C$ at $v$ is the regular CW complex represented by the collection of faces \[\F(\Lk(v,C)):=\{\RN^1_v \sigma: \sigma \in \F(C),\ v\subset \sigma\}.\]
Our goal is to investigate whether, for any given hemisphere $\FD$, the pair $(\RS(\s,\FD), \RS(\s,\FD\cap \HA))$ is out-$j$ collapsible for some suitable integer $j$. With an intuition guided by the Lefschetz hyperplane theorems for complex varieties, one could guess that the right $j$ to consider is the integer 
\[\ii{d}  :=  \left\lfloor \nicefrac{d}{\cc} \right\rfloor  - 1.\]
This will turn out to be correct. Before we start with the main theorem of this section, we anticipate a special case: we consider the case of the empty arrangement.

\begin{lemma}\label{lem:hemisphere}
Let $\EH$ be a fine extension of the empty arrangement in $S^d$, and let $\s:=\s(\EH)$ be the associated combinatorial stratification of $S^d$. Let $\FD$ be a closed hemisphere that is in general position with respect to $\s(\EH)$. Then $\RS(\s,\FD)$ is collapsible.
\end{lemma}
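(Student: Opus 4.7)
The plan is to reduce the lemma to the collapsibility of the bounded complex of an affine hyperplane arrangement, via gnomonic projection from the pole of $\FD$. Write $\FD=\{x\in S^d:\langle x,p\rangle\ge 0\}$ for a pole $p\in S^d$. Central (gnomonic) projection from the origin of $\R^{d+1}$ identifies the open hemisphere $\FD\setminus\partial\FD$ homeomorphically with the affine chart $\{y\in\R^{d+1}:\langle y,p\rangle=1\}\cong\R^d$, sending each hyperplane of $\EH$ to an affine hyperplane of $\R^d$ and producing an affine arrangement $\EH'$. The general position assumption says $\partial\FD$ is transverse to every face of $\s$, which forces a face $\sigma\subset\FD$ to be contained entirely in the \emph{open} hemisphere $\FD\setminus\partial\FD$ (otherwise $\sigma$ would have to cross $\partial\FD$ transversally, contradicting $\sigma\subset\FD$). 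Being compact, its gnomonic image is a bounded face of $\EH'$; conversely, every bounded face of $\EH'$ lifts to a face of $\s$ compactly contained in $\FD\setminus\partial\FD$. Thus $\RS(\s,\FD)$ is combinatorially identified with the bounded complex $B(\EH')$ of the affine arrangement $\EH'$.

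Next, I would prove that $B(\EH')$ is collapsible using a generic linear Morse function in the polyhedral-sweeping spirit of \cite{AB-SSZ}. Pick $h:\R^d\to\R$ generic with respect to $\EH'$, taking distinct values on all vertices of $B(\EH')$ and generic with respect to the flats of $\EH'$. For each face $\sigma$ of $B(\EH')$, the restriction $h|_\sigma$ attains its minimum at a unique vertex $v_\sigma$, and the classical ``delete-the-minimum'' recipe pairs $\sigma$ with the facet of $\sigma$ opposite to $v_\sigma$. For a single convex polytope this is the standard collapsing matching; the techniques of \cite{AB-SSZ} extend it to a globally consistent Morse matching on the whole bounded complex, whose only critical cell is the global $h$-minimum vertex, thereby establishing collapsibility.

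The main obstacle is verifying the global consistency of this matching: a codimension-one face shared between two adjacent bounded cells of $\EH'$ must not be paired twice, and one must also ensure that matchings coming from neighboring polytopes are compatible at their common faces. This requires a careful analysis of the sweep by $h$ together with the flat structure of $\EH'$ --- precisely the polyhedral sweeping arguments developed in \cite{AB-SSZ}, which is where I would expect the bulk of the technical work to go.
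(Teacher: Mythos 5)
Your reduction step is fine: general position does force every face of $\s$ contained in $\FD$ to lie in the open hemisphere (no face can meet $\partial\FD$ while staying on one side of it), so $\RS(\s,\FD)$ is indeed isomorphic to the bounded complex of the affine arrangement obtained by central projection. But this only rephrases the lemma. Collapsibility of the bounded complex of an affine hyperplane arrangement is not a citable black box --- what is classical (e.g.\ in Bj\"orner et al.) is its \emph{contractibility} --- and in fact the statement you reduce to is essentially equivalent to the lemma itself. The paper's own proof also passes to the affine chart via the projection $\zeta$, so up to this point you and the paper agree.

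The gap is in the second step. The matching you describe (``pair $\sigma$ with the facet of $\sigma$ opposite to $v_\sigma$'') is not well defined for a general polytopal cell, and, as you yourself note, you have not verified that the cell-by-cell matchings assemble into a single acyclic matching on the whole bounded complex; you defer precisely the ``bulk of the technical work'' to the techniques of \cite{AB-SSZ} without carrying it out. The paper closes exactly this gap by a different mechanism, namely induction on the dimension $d$: it splits $\FD$ along a hyperplane $H\in\EH$, sweeps the vertices of $\RS(\s,\FD\cap\overline{H}_+)$ from the top by a perturbed linear functional, and deletes them one at a time. Each deletion $\Sigma_i\searrow\Sigma_i-v_i$ is certified \emph{locally}: the link $\Lk(v_i,\Sigma_i)$ is again the restriction of a combinatorial stratification of $S^{d-1}$ (namely of $\Lk(v_i,\s(\EH))$) to a general-position hemisphere, hence collapsible by the inductive hypothesis; the remaining complex $\RS(\s,\FD\cap H)$ is handled by the same induction. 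No global consistency of a matching ever has to be checked. Without this inductive ingredient (or a genuine substitute for it) your argument does not close.
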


\begin{proof}
The proof is by induction on the dimension, the case $d=0$ clearly being true. Assume now $d\ge 1$. Let $H$ denote any element of $\EH$, and let $\overline{H}_+$, $\overline{H}_-$ denote the closed hemispheres in $S^d$ bounded by $H$. The proof of the induction step is articulated into three simple parts:
\begin{compactenum}[(1)]
\item We prove $\RS(\s, \FD\cap \overline{H}_+)  \searrow  \RS(\s, \FD\cap H).$
\item We prove $\RS(\s, \FD\cap \overline{H}_-)  \searrow  \RS(\s, \FD\cap H).$
\item We show that $\RS(\s, \FD\cap {H})$ is collapsible.
\end{compactenum}

These three steps show that $\RS(\s, \FD)$ is collapsible: The combination of (1) and (2) gives that $\RS(\s, \FD)$ collapses to $\RS(\s, \FD\cap {H}),$ which is collapsible by step (3). We now show point (1); the proof of (2) is analogous and left out, and (3) is true by induction assumption.

Let $\zeta$ denote a central projection of $\intx \FD$ to $\R^{d}$, and let $\nu_+$ denote the interior normal to the halfspace $\zeta(\overline{H}_+ \cap \intx\FD)\subset \R^{d}$. Perturb $\nu_+$ to a vector $\nu$ such that the function $x \ \mapsto \ \langle \zeta(x),  \nu \rangle$ 
\begin{compactenum}[(a)]
\item \Defn{preserves the order given by $\langle  \zeta(\cdot), \nu_+  \rangle$} and
\item \Defn{induces a strict total order} on $\F_0(\RS(\s, \FD\cap \overline{H}_+))$, 
\end{compactenum}
that is, for any two vertices $v$, $w$ of $\RS(\s, \FD\cap \overline{H}_+)$, we have the following:
\begin{compactenum}[(a)]
\item  
If $ \langle  \zeta(v), \nu_+  \rangle  >  \langle \zeta(w),\nu_+ \rangle$, then $\langle  \zeta(v),  \nu  \rangle  > \langle  \zeta(w), \nu  \rangle$;
\item 
If $ \langle  \zeta(v), \nu  \rangle =  \langle \zeta(w),\nu \rangle$, then $v = w$.
\end{compactenum}
The function $x \mapsto \langle  \zeta(x), \nu  \rangle$ orders the $n$ vertices $v_0,v_1,\, \cdots, v_n$ of $\s$ in the interior of $\FD\cap \overline{H}_+$, with the labeling reflecting the order ($v_0$ is the vertex with the highest value under this function).
Let $\Sigma_i$ denote the complex $\RS(\s, \FD \cap \overline{H}_+)-\{v_0,v_1, \, \cdots, v_{i-1}\}$. 
We demonstrate $\RS(\s, \FD\cap \overline{H}_+)  \searrow  \RS(\s, \FD\cap H)$ by showing that, for all $i\in [0,n]$, we have $\Sigma_i  \searrow  \Sigma_i-v_i =\Sigma_{i+1}.$

To see this, notice that $\Lk(v_i,\s(\EH))$ is a combinatorial stratification of the $(d-1)$-sphere $\RN_{v_i}^1 S^d$, given by the fine hyperplane extension $\Lk({v_i}, \EH)$ of the empty arrangement. The complex $\Lk({v_i},\Sigma_i)$ is the restriction of $\Lk({v_i}, \s(\EH))$ to the general position hemisphere~$\RN_{v_i}^1 \FD_{v_i}$, where
\[\FD_{v_i} \; := \; 
\zeta^{-1}( \{  x\in \R^{d-1} : 
\langle  \zeta({v_i}),  \nu  \rangle 
\; \ge \; 
\langle  x,  \nu  \rangle \}), \]
since the vertices $v_0,\, \cdots, v_{i-1}$ were removed already. Thus, by induction assumption, we have that $\Lk({v_i}, \Sigma_i)$ is collapsible; consequently, $\Sigma_i$ collapses to $\Sigma_{i+1}$, as desired.
\end{proof}

\begin{theorem}\label{thm:hemisphere}
Let $\HA$ be a nonempty $\cc$-arrangement in $S^d$, let $\EH$ be a fine hyperplane extension of $\HA$, and let $\s:=\s(\EH)$ denote the combinatorial stratification of $S^d$ given by it. Let $\FD$ be a closed hemisphere that is in general position with respect to $\s$. Then, for any $k$-dimensional subspace $H$ of $\SH\subset\EH$ extending an element of~$\HA$, we have the following:
\begin{compactenum}[\rm (A)]
\item The pair $(\RS(\s,\FD\cap H), \RS(\s,\FD\cap \HA\cap H))$ is out-$\ii{d}$ collapsible. 
\item If $\HA$ is additionally non-essential, then  $(\RS(\s,\FD\cap H), \RS(\s,\FD\cap \HA\cap H))$ is a collapsible pair. 
\end{compactenum}
\end{theorem}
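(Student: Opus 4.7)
The plan is to prove (A) and (B) simultaneously by induction on the ambient dimension $d$, with Lemma~\ref{lem:hemisphere} providing both the base case and the structural template. Mirroring the sweep argument in the proof of Lemma~\ref{lem:hemisphere}, I choose a generic linear functional $\nu$ on the ambient real vector space of $H$, obtaining a strict total order on the vertices of $\RS(\s,\FD\cap H)$ lying in $\intx \FD$. I then peel off these vertices one at a time in order of decreasing $\nu$-value, each removal being justified by Lemma~\ref{lem:outcoll}. Once all interior vertices have been removed, what remains is the restriction of $\s$ to $\partial \FD \cap H$, a combinatorial stratification of a $(k-1)$-sphere carrying a $\cc$-arrangement of smaller ambient dimension, which the induction on $d$ handles.

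\smallskip

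At each vertex $v$ to be removed, Lemma~\ref{lem:outcoll} reduces the task to analyzing the link pair $(\Lk(v,\RS(\s,\FD\cap H)), \Lk(v,\RS(\s,\FD\cap \HA\cap H)))$. If $v\notin \HA$, the second component is empty and the first is a restriction of a fine extension of the empty arrangement in $\Lk(v,S^d)$ to a general-position hemisphere; Lemma~\ref{lem:hemisphere} gives collapsibility, so Lemma~\ref{lem:outcoll} delivers an out-$j$ collapse for every $j$. If $v\in \HA$, then $\Lk(v,\HA)$ is itself a $\cc$-arrangement in $\Lk(v,S^d)\cong S^{d-1}$ (the $\cc$-arrangement property is preserved by linking because all intersection codimensions are $\cc$-divisible), and $\Lk(v,H)$ extends one of its elements. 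I invoke the inductive hypothesis in dimension $d-1$ to control this link pair.

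\smallskip

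The dimension bookkeeping is the key interplay between (A) and (B). By Lemma~\ref{lem:outcoll}, an out-$j$ collapse of the link pair produces an out-$(j+1)$ collapse of the outer pair. Inductively, (A) provides an out-$\ii{d-1}$ collapse of the link, hence out-$(\ii{d-1}+1)$ for the outer pair. When $d$ is even, $\ii{d}=\ii{d-1}+1$ and (A) alone suffices. When $d$ is odd, however, $\ii{d}=\ii{d-1}$ and we would lose a dimension. The saving observation is that for odd $d$ the sphere $\Lk(v,S^d)\cong S^{d-1}$ has even dimension, so every nonempty intersection of elements of $\Lk(v,\HA)$ has even codimension and hence positive dimension; thus $\Lk(v,\HA)$ is automatically non-essential. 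Part (B) of the inductive hypothesis then applies and gives collapsibility of the link pair, whereupon Lemma~\ref{lem:outcoll} yields an out-$j$ collapse for every $j$, including $j=\ii{d}$. In this way (A) and (B) are threaded together through the induction.

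\smallskip

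The hard part will be proving (B) in the inductive step: when $\HA$ is non-essential, I must arrange the sweep so that \emph{no} outwardly matched face is created at all. By Proposition~\ref{prp:relind}, this is equivalent to showing that $\RS(\s,\FD\cap \HA\cap H)$ is collapsible. I plan to achieve this by exploiting the product structure of a non-essential arrangement: it factors as a lower-dimensional essential arrangement times a nontrivial linear subspace $L$ contained in every element of $\HA$. Choosing $\nu$ to be positive along $L$, the sweep will collapse $\RS(\s,\FD\cap \HA\cap H)$ onto its intersection with the lowest $\nu$-level inside $L$ while preserving the partition by $\HA$, eventually contracting the subcomplex to a vertex; invoking the links-case-analysis above for this collapse then completes the proof of (B).
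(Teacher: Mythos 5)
Your parity bookkeeping (links are out-$(\ii{d-1}+1)$ when the link arrangement is essential, and non-essentiality is automatic when the link sphere has even dimension, so part (B) rescues the odd case) is exactly the device the paper uses. But the skeleton of your induction has a genuine flaw: the sweep has no valid target. You propose to remove all vertices of $\RS(\s,\FD\cap H)$ lying in $\intx\FD$ and to land on ``the restriction of $\s$ to $\partial\FD\cap H$.'' Since $\partial\FD$ is in \emph{general position} with respect to $\s$, no face of $\s$ is contained in $\partial\FD\cap H$; hence $\RS(\s,\partial\FD\cap H)$ is empty, every vertex of $\RS(\s,\FD\cap H)$ is ``interior,'' and removing them all leaves nothing --- which is not the endpoint of a collapse. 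Even reinterpreting the target as the sliced stratification of the $(k-1)$-sphere $\partial\FD\cap H$, that is not a subcomplex of $\RS(\s,\FD\cap H)$, and in any case a collapse preserves homotopy type, so the contractible complex $\RS(\s,\FD\cap H)$ cannot collapse onto anything homotopy equivalent to a sphere. The paper's way out is to cut along a codimension-one subspace $\eta\subset H$ taken from the sign extension $\SH$ --- so that $\eta$ is a union of strata (no face of $\s$ crosses it) and still extends the same element $h$ of $\HA$ --- collapse each closed half $\FD\cap\overline{\eta}_\pm$ onto the equatorial \emph{ball} $\FD\cap\eta$, and recurse on $\eta$. This forces a second induction parameter $k=\dim H$ (with $d\ge k\ge d-2$), which your induction on $d$ alone cannot supply; correspondingly, the links of swept vertices live in $\RN^1_{v}H\cong S^{k-1}$, not $S^{d-1}$, and the case analysis must be run on $k$ relative to $d$ (the paper's cases $k\equiv d\equiv 1$, $k\equiv 0$, and $k=d-1\equiv 1$ mod $\cc$), not on the parity of $d$ alone.

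The treatment of (B) is also off target. You declare it the hard part and propose to prove collapsibility of $\RS(\s,\FD\cap\HA\cap H)$ directly by a product-structure sweep, which you do not carry out. Given (A), Proposition~\ref{prp:relind} makes (B) essentially free: a non-essential nonempty arrangement has a nonempty common intersection $\sigma\subset H$, every set $h\cap\FD\cap H$ is a convex body containing $\sigma\cap\FD$, so $\RS(\s,\FD\cap\HA\cap H)$ deformation retracts onto the contractible complex $\RS(\s,\FD\cap\sigma)$ and is therefore contractible; by the Euler-characteristic count in Proposition~\ref{prp:relind}, the out-$\ii{d}$ collapsing sequence supplied by (A) then has \emph{zero} outwardly matched faces, i.e.\ the pair is collapsible. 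You should use that implication rather than attempt an independent collapsibility proof.
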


\begin{proof}
To simplify the notation, we set $\RS{[M]}  :=  \RS \left(  \s,  M\right)$ and $\RS'{[M]}  :=  \RS \left( \s,  M \cap \HA  \right)$ for any subset $M$ of~$S^d$.
We proceed by induction on $d$ and $k$. Let $\io_{d,k}$ denote the statement that part (A) is proven for all spheres of dimension $d$ and subspaces $H$ of dimension $k$. Let $\iit_{d,k}$ denote the statement that part (B) holds for arrangements in spheres of dimension $d$ and subspaces $H$ of dimension $k$. Since $H$ extends an element of $\HA$, we always have $d\ge k \ge d-2$.

For the base cases of the induction, it suffices to treat the cases $\io_{2,0}$ and $\io_{3,1}$. In both cases, $H$ is an element of $\HA$, so $\RS{[\FD\cap H]}= \RS'{[\FD\cap H]}$. Since a pair $(C,C)$ is a collapsible pair if and only if $C$ is collapsible, it suffices to prove that $\RS{[\FD\cap H]}$ is collapsible; in case $\io_{2,0}$, the complex $\RS{[\FD\cap H]}$ is a vertex; in case $\io_{3,1}$, the complex $\RS{[\FD\cap H]}$ is a tree; in both cases, the complex is trivially collapsible. Assume now $d\ge 2$ and $k\ge 0$. Our inductive proof proceeds like this:

\smallskip

\begin{compactenum}[\bf I.]
\item We prove that $\io_{d,k}$ implies $\iit_{d,k}$ for all $d$, $k$.
\item We prove that $\io_{k,k}$ implies $\io_{d,k}$ for $k= d-\cc$.
\item We prove that $\iit_{k-1,k-1}$, $\io_{k-1,k-1}$ and $\io_{d,k-1}$ together imply $\io_{d,k}$ for $k > d-\cc$.
\end{compactenum}

\medskip

\noindent \textbf{Part I. $\io_{d,k}$ implies $\iit_{d,k}$  for all $d$, $k$}

\medskip

Let $\sigma$ denote the intersection of all elements of the non-essential arrangement $\HA$. Since $\HA$ is nonempty, so is $\sigma$ and $\RS'[\FD\cap H]$ deformation retracts onto the contractible complex $\RS[\FD\cap \sigma ]$. Thus, by the second part of Proposition~\ref{prp:relind} and inductive assumption $\io_{d,k}$, the pair $(\RS[\FD\cap H], \RS[\FD\cap \HA\cap H])$ is a collapsible pair. 

\medskip

\noindent \textbf{Part II. $\io_{k,k}$ implies $\io_{d,k}$ for $k= d-\cc$}

\medskip

We have to show that $(\RS{[\FD\cap H]},  \RS'{[\FD\cap H]} )$ is out-$\ii{d}$ collapsible. We will see that it is even a collapsible pair. The $(d-2)$-dimensional subspace $H$ is an element of $\HA$, so that we have \[ (\RS{[\FD\cap H]},  \RS'{[\FD\cap H]} ) = (\RS( \s,  \FD\cap H)  ,  \RS( \s , \FD \cap H \cap \HA) )= ( \RS( \s,  \FD\cap H)  ,  \RS( \s,  \FD\cap H )  ).\]
But a pair $(C,C)$ is a collapsible pair if $C$ is collapsible. By definition, if a pair is out-$\ii{d}$ collapsible, the first complex in the pair is collapsible; so, since the pair $(\RS[\FD\cap H], \RS[\FD\cap H\cap \HA])$ is out-$\ii{d}$ collapsible by inductive assumption $\io_{k,k}$, it follows trivially that $\RS[\FD\cap H]$ is collapsible.

\medskip

\noindent \textbf{Part III. $\iit_{k-1,k-1}$, $\io_{k-1,k-1}$ and $\io_{d,k-1}$ together imply $\io_{d,k}$ for $k > d-\cc$}

\medskip

Let $h$ denote the element of $\HA$ extended by $H$. Let $\eta\in\SH\subset\EH$ be a codimension-one subspace of $H$ that extends $h$ as well. Let $\overline{\eta}_+$ and $\overline{\eta}_-$ be the closed hemispheres in $H$ bounded by $\eta$. We prove that the pair $(\RS{[\FD\cap H]},  \RS'{[\FD\cap H]} )$ is out-$\ii{d}$ collapsible.
The proof consists of three steps:
\begin{compactenum}[(1)]
\item We prove \[(\RS{[\FD\cap \overline{\eta}_+]},  \RS'{[\FD\cap \overline{\eta}_+]} )  \searrow_{\textrm{ out-}\ii{d}}  (\RS{[\FD\cap \eta ]},  \RS'{[\FD\cap {\eta}]} ).\]
\item Symmetrically, we have
\[(\RS{[\FD\cap \overline{\eta}_-]},  \RS'{[\FD\cap \overline{\eta}_-]} )  \searrow_{\textrm{ out-}\ii{d}}  (\RS{[\FD\cap \eta ]},  \RS'{[\FD\cap {\eta}]} ).\]
\end{compactenum}
The combination of these two steps proves 
\[(\RS{[\FD\cap H]},  \RS'{[\FD\cap H]} )  \searrow_{\textrm{ out-}\ii{d}}  (\RS{[\FD\cap \eta ]},  \RS'{[\FD\cap {\eta}]} ).\]
\begin{compactenum}[(1)]
\setcounter{enumi}{+2}
\item It then remains to show that $(\RS{[\FD\cap \eta]},  \RS'{[\FD\cap \eta]} )$ is out-$\ii{d}$ collapsible. This, however, is true by inductive assumption $\io_{d,k-1}$.
\end{compactenum}
\noindent Step (2) is completely analogous step (1), so its proof is left out. It remains to prove (1).
To achieve this, we establish a geometry-based strict total order on the vertices of $\RS{[\FD\cap \overline{\eta}_+]}$, and we collapse them away one at the time. 

In details: Let $\zeta$ be a central projection of $\intx\FD\cap H$ to $\R^{k}$, and let $\nu_+$ denote the interior normal to the halfspace $\zeta(\intx \overline{\eta}_+)\subset \R^{d-1}$. Perturb $\nu_+$ to a vector $\nu$ such that the function $x \mapsto \langle  \zeta(x),  \nu \rangle$ preserves the order given by $\langle  \zeta(\cdot),  \nu_+  \rangle$ and induces a strict total order on $\F_0(\RS(\s, \FD\cap \overline{\eta}_+))$ (see also the proof of Lemma~\ref{lem:hemisphere}). 

The function $\langle \zeta(x), \nu  \rangle$ induces a strict total order on the vertices $v_0,v_1,\, \cdots, v_n$ of $\s$ in the relative interior of $\FD\cap \overline{\eta}_+$, starting with the vertex $v_0$ maximizing it and such that the labeling reflects the order. Set $\Sigma_i:=\RS{[\FD\cap \overline{\eta}_+]}-\{v_0,v_1, \, \cdots, v_{i-1}\}.$ We show (1) by demonstrating that, for all $i\in [0,n]$, we have that \[(\Sigma_i, \RS(\Sigma_i, \HA))  \searrow_{\textrm{ out-}\ii{d}}  (\Sigma_{i+1}, \RS(\Sigma_{i+1}, \HA) )=(\Sigma_i-{v_i},\RS(\Sigma_i,\HA)-{v_i}).\]
To see this, notice that $\Lk({v_i},\s(\EH^H))$ is a combinatorial stratification of the $(k-1)$-sphere $\RN_{v_i}^1 H$, given by the hyperplane extension $\Lk({v_i}, \EH^H)$ of the $\cc$-arrangement $\Lk({v_i},\HA^H)$. The complex $\Lk({v_i}, \Sigma_i )$ is the restriction of $\Lk({v_i}, \s(\EH^H))$ to the general position hemisphere $\RN_{v_i}^1 \FD_{v_i}$, where
\[\FD_{v_i} \; := \; 
\zeta^{-1} (\{  x\in \R^{k-1} : 
\langle  \zeta({v_i}), \nu  \rangle 
\; \ge \; 
\langle  x,  \nu \rangle \}), \]
since ${v_i}$ maximizes $\langle  \zeta(x),  \nu  \rangle$ among the vertices $v_i, v_{i+1}, v_{i+2},\, \cdots$ and the vertices of $\RS[\FD\cap \eta]$. 

At this point, we want to apply the induction assumptions $\io_{k-1,k-1}$ and $\iit_{k-1,k-1}$ to $\Lk({v_i}, \Sigma_i )$. There are two cases to consider.
\begin{compactitem}[$\circ$]
\item If $k=d=1$ mod $\cc$, then $\Lk({v_i},\HA^H)$ is a non-essential $\cc$-arrangement in $\RN_{v_i}^1 H$, and it is nonempty if and only if $v_i$ is in $\RS(\s,\HA)$. Thus, by inductive assumption~$\iit_{k-1,k-1}$ and Lemma~\ref{lem:hemisphere}, we have that the pair 
$\big(\Lk({v_i}, \Sigma_i),\Lk({v_i}, \RS(\Sigma_i,\HA))\big )$
is a collapsible pair. Lemma \ref{lem:outcoll} now proves
\[(\Sigma_i,\RS(\Sigma_i,\HA)) \searrow_{\textrm{ out-}\ii{d}} (\Sigma_i-{v_i},\RS(\Sigma_i,\HA)-{v_i}).\]

\item If $k = 0$ mod $\cc$ or $k=d-1=1$ mod $\cc$, then $\Lk({v_i},\HA^H)$ is a $\cc$-arrangement in $\RN_{v_i}^1 H$. By inductive assumption~$\io_{k-1,k-1}$ and Lemma~\ref{lem:hemisphere}, the pair $\big(\Lk({v_i}, \Sigma_i),\Lk({v_i}, \RS(\Sigma_i,\HA)) \big)$
is an out-$\ii{k-1}$ collapsible~pair. Moreover, if $k
>2$, then $v_i\in\RS(\s,\HA)$ implies that $\Lk({v_i},\HA^H)$ is nonempty. 
Finally, we have $\ii{k}=\ii{d}-1$ by assumption on $d$ and $k$. Using Lemma \ref{lem:outcoll}, we consequently obtain that
\[(\Sigma_i,\RS(\Sigma_i,\HA)) \searrow_{\textrm{ out-}\ii{d}} (\Sigma_i-{v_i},\RS(\Sigma_i,\HA)-{v_i}). \qedhere\]
\end{compactitem}
\end{proof}

\section{Proof of Theorem~\ref{MTHM:LEFT}}
We have now almost all the tools to prove our version of the Lefschetz Hyperplane Theorem (Theorem~\ref{thm:lef}); it only remains for us to establish the following lemma:

\begin{lemma}\label{lem:lef}
Let $\FD$, $\FD'$ denote a pair of closed hemispheres in $S^d$. Let $\HA$ denote a $\cc$-arrangement w.r.t.\ the complement $\OD$ of~$\FD$, let $\EH$ be a fine extension of $\HA$, and let $\s:=\s(\EH)$ denote the combinatorial stratification of $S^d$ given by it. If $\FD'$ is in general position with respect to $\s(\EH\cup \partial \FD)$, then 
\[(\RS(\s,\FD\cup \FD'),\RS(\s,(\FD\cup \FD')\cap \HA)) \searrow_{\textrm{ out-}\ii{d}} (\RS(\s,\FD),\RS(\s,\FD\cap \HA)).\]
\end{lemma}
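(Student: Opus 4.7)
The plan is to sweep through the vertices of $\s$ lying in $\FD' \setminus \FD$, collapsing them away one at a time, in direct analogy with step~(1) of Part~III of the proof of Theorem~\ref{thm:hemisphere}. Concretely, I would centrally project the open hemisphere $\intx \FD'$ onto $\R^d$ via a map $\zeta$; since $\FD'$ is in general position with $\partial \FD$, the hyperplane $\zeta(\partial \FD \cap \intx \FD')$ bounds an open halfspace $\zeta((\intx\FD')\setminus \FD)$ with inward normal $\nu_-$, and by general position no vertex of $\s$ lies on $\partial\FD'$. I then perturb $\nu_-$ to a vector $\nu$ such that $x\mapsto \langle \zeta(x),\nu\rangle$ preserves the order given by $\nu_-$ and induces a strict total order on $\F_0(\RS(\s,\FD\cup\FD'))$. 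Enumerate the vertices of $\s$ in $(\intx\FD')\setminus \FD$ as $v_0,v_1,\ldots,v_n$ in decreasing order of height, and set
\[\Sigma_i := \RS(\s,\FD\cup\FD') - \{v_0,\ldots,v_{i-1}\},\]
so that $\Sigma_0=\RS(\s,\FD\cup\FD')$ and $\Sigma_{n+1}=\RS(\s,\FD)$.

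I would then prove that for every $i\in[0,n]$,
\[(\Sigma_i,\RS(\Sigma_i,\HA)) \searrow_{\textrm{out-}\ii{d}} (\Sigma_i - v_i,\RS(\Sigma_i,\HA)-v_i) = (\Sigma_{i+1},\RS(\Sigma_{i+1},\HA)),\]
and concatenate these out-$\ii{d}$ collapses to obtain the lemma. By the choice of ordering and perturbation (exactly as in the corresponding argument for Theorem~\ref{thm:hemisphere}), $\Lk(v_i,\Sigma_i)$ coincides with the restriction of the ambient link stratification $\Lk(v_i,\s)$ to the hemisphere $\RN^1_{v_i}\FD_{v_i}$, where $\FD_{v_i} := \zeta^{-1}(\{x:\langle x,\nu\rangle\le \langle \zeta(v_i),\nu\rangle\})$, and this hemisphere is in general position with $\Lk(v_i,\s)$. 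By Lemma~\ref{lem:outcoll}, the required out-$\ii{d}$ collapse will follow from sufficient out-collapsibility of the link pair $(\Lk(v_i,\Sigma_i),\Lk(v_i,\RS(\Sigma_i,\HA)))$ in the $(d-1)$-sphere $\RN^1_{v_i}S^d$.

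The case analysis on the link splits into three subcases. If $v_i$ lies in no element of $\HA$, then $\Lk(v_i,\HA)=\emptyset$, and Lemma~\ref{lem:hemisphere} makes $(\Lk(v_i,\Sigma_i),\emptyset)$ a collapsible pair, so Lemma~\ref{lem:outcoll} gives the collapse. Otherwise $v_i\in\HA\cap\OD$, so $\Lk(v_i,\HA)$ is a genuine $\cc$-arrangement in $\RN^1_{v_i}S^d\cong S^{d-1}$. When $d$ is even, Theorem~\ref{thm:hemisphere}(A) applied to $\Lk(v_i,\s)$ with $H$ the whole ambient $(d-1)$-sphere gives out-$\ii{d-1}$ collapsibility of the link pair, and a direct check using $\cc=2$ gives $\ii{d-1}+1=\ii{d}$, so Lemma~\ref{lem:outcoll} yields the desired out-$\ii{d}$ collapse. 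The hardest case is $d$ odd, where $\ii{d-1}+1=\ii{d}+1$ and out-$\ii{d-1}$ is too weak by one; this is the main obstacle. I would resolve it by noting that since $v_i\in\HA$, the intersection $\bigcap_{h\in\HA,\,v_i\in h}h$ has codimension a positive multiple of $\cc=2$ and at most $d$; with $d$ odd, this forces the codimension to be at most $d-1$, so the intersection has positive dimension and $\Lk(v_i,\HA)$ is non-essential. Theorem~\ref{thm:hemisphere}(B) then upgrades the link pair to a collapsible pair, and Lemma~\ref{lem:outcoll} again delivers the out-$\ii{d}$ collapse, completing the induction.
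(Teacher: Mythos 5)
Your proposal is correct and follows essentially the same route as the paper's proof: the same sweep of the vertices of $\OD\cap\FD'$ by a perturbed linear functional (your sublevel hemispheres $\FD_{v_i}$ are, up to reparametrization, the same pencil rotating from $\FD'$ to $\FD$ about $\partial\FD\cap\partial\FD'$ that the paper uses), the same reduction via Lemma~\ref{lem:outcoll} to the link pairs, and the same parity split invoking Theorem~\ref{thm:hemisphere}(A) when $d\equiv 0 \bmod \cc$ and Theorem~\ref{thm:hemisphere}(B) with Lemma~\ref{lem:hemisphere} when $d\equiv 1\bmod\cc$. The only addition is that you spell out why $\Lk(v_i,\HA)$ is non-essential for $d$ odd (the flat through $v_i$ has positive even codimension at most $d-1$, hence positive dimension), a point the paper asserts without proof.
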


\begin{proof}
Let $\zeta$ denote a central projection of $\OD$ to $\R^d$, and let $\nu_+$ denote the outer normal to the halfspace $\zeta( \OD \cap \FD')\subset \R^{d}$. Perturb $\nu_+$ to a vector $\nu$ such that the function $x \mapsto \langle  \zeta(x),  \nu  \rangle$ preserves the order given by $\langle  \zeta(\cdot),  \nu_+  \rangle$ and induces a strict total order on $\F_0(\RS(\s, \OD\cap \FD'))$ (see also the proof of Lemma~\ref{lem:hemisphere}).

The function $x \mapsto  \langle \zeta(x), \nu \rangle$ gives an order on the vertices $v_0,v_1,\, \cdots, v_n$ of $\RS(\s, \OD\cap \FD')$, starting with the vertex $v_0$ with the {highest} value under this function and such that the vertices are labeled to reflect their order. Set \[\Sigma_i:=\RS(\s, \OD \cap \FD')-\{v_0,v_1, \, \cdots, v_{i-1}\}.\]
In order to prove 
\[(\RS(\s,\FD\cup \FD'),\RS(\s,(\FD\cup \FD')\cap \HA)) \searrow_{\textrm{ out-}\ii{d}} (\RS(\s,\FD),\RS(\s,\FD\cap \HA)),\]
it suffices to prove that, for all $i\in [0,n]$, we have
\[(\Sigma_i, \RS(\Sigma_i,\HA))  \searrow_{\textrm{ out-}\ii{d}}  (\Sigma_i-v_i, \RS(\Sigma_i,\HA)-{v_i} ) =(\Sigma_{i+1}, \RS(\Sigma_{i+1},\HA)).\]
Thus, let $v_i$ denote any vertex of $\s$ in $ \OD \cap \FD'$.

The complex $\Lk(v_i,\s)$ is a combinatorial stratification of the $(d-1)$-sphere $\RN_{v_i}^1 S^d$ given by the fine extension $\Lk(v_i,\EH)$ of the $\cc$-arrangement $\Lk(v_i,\HA)$, and the complex $\Lk(v_i, \Sigma_i)$ is the restriction of $\Lk(v_i, \s)$ to the general position hemisphere $\RN_v^1 \FD_{v_i}$, where
\[\FD_{v_i} \; := \; \zeta^{-1} 
(\{  x\in \R^{k-1} : \langle  \zeta({v_i}),  \nu \rangle \; \ge \; 
\langle  x,  \nu  \rangle \}). \] 
Thus, as in Part III of Theorem~\ref{thm:hemisphere}, there are two cases:
\begin{compactitem}[$\circ$]
\item If $d=1$ mod $\cc$, then $\Lk(v_i,\HA)$ is a non-essential $\cc$-arrangement in $\RN_{v_i}^1 S^d$, and it is nonempty if and only if $v_i$ is in $\RS(\s,\HA)$. Thus, by Theorem~\ref{thm:hemisphere}(B) and Lemma~\ref{lem:hemisphere}, the pair 
$\big(\Lk({v_i},\Sigma_i),\Lk({v_i},\RS(\Sigma_i,\HA))\big)$
is a collapsible pair. Consequently, Lemma \ref{lem:outcoll} proves that the pair $(\Sigma_i, \RS(\Sigma_i,\HA))$ out-$\ii{d}$ collapses to the pair 
\[(\Sigma_i-v_i, \RS(\Sigma_i,\HA) -{v_i}) =(\Sigma_{i+1}, \RS(\Sigma_{i+1},\HA)).\]
\item If $d = 0$ mod $\cc$, then $\Lk({v_i},\HA)$ is a $\cc$-arrangement in $\RN_{v_i}^1 S^d$. By Theorem~\ref{thm:hemisphere}(A) and Lemma~\ref{lem:hemisphere}, the pair 
$ \big(\Lk({v_i},\Sigma_i),\Lk({v_i},\RS(\Sigma_i,\HA))\big)$
is an out-$\ii{d-1}$ collapsible pair. Moreover, if $d> 2$, then $v_i\in\RS(\s,\HA)$ implies that $\Lk({v_i},\HA^H)$ is nonempty. Since $\ii{d-1}=\ii{d}-1$ and by Lemma \ref{lem:outcoll}, we obtain \[(\Sigma_i,\RS(\Sigma_i,\HA)) \searrow_{\textrm{ out-}\ii{d}} (\Sigma_i-{v_i},\RS(\Sigma_i,\HA)-{v_i}). \qedhere\]
\end{compactitem}
\end{proof}

\begin{cor}\label{cor:lef}
Let $\FD$ denote a closed hemisphere in $S^d$, let $\OD$ denote its open complement. Let $\HA$ denote a $\cc$-arrangement w.r.t.\ $\OD$, and let $\s$ denote a combinatorial stratification of $S^d$ induced by~$\HA$. If $H$ is a hyperplane in $S^d$ that is in general position with respect to $\s(\EH\cup \partial \FD)$, then
\[\big(\RS(\s, S^d{\setminus} (\OD \cap H)),\RS(\s,  \HA \cap S^d{\setminus} (\OD \cap H))\big)  \searrow_{\textrm{out-}\ii{d}} (\RS(\s,\FD), \RS(\s,\FD\cap \HA)).\]
\end{cor}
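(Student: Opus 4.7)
The plan is to apply Lemma~\ref{lem:lef} twice, one application per closed hemisphere bounded by $H$, and to splice the resulting Morse matchings. Let $\FD^+$ and $\FD^-$ denote the two closed hemispheres of $S^d$ with common boundary $H$; since $H$ is in general position with respect to $\s(\EH\cup\partial\FD)$, each of $\FD^+, \FD^-$ qualifies as a ``second hemisphere'' for Lemma~\ref{lem:lef}. The key geometric identity
\[S^d\setminus(\OD\cap H)\;=\;(\FD\cup\intx\FD^+)\,\cup\,(\FD\cup\intx\FD^-),\qquad (\FD\cup\intx\FD^+)\cap(\FD\cup\intx\FD^-)=\FD,\]
gives the corresponding decomposition $\RS(\s,S^d\setminus(\OD\cap H))=\RS(\s,\FD\cup\intx\FD^+)\cup\RS(\s,\FD\cup\intx\FD^-)$ of complexes meeting in $\RS(\s,\FD)$.

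Next, I would produce, for each sign $\pm$ separately, an out-$\ii{d}$ collapse
\[\bigl(\RS(\s,\FD\cup\intx\FD^\pm),\RS(\s,(\FD\cup\intx\FD^\pm)\cap\HA)\bigr)\searrow_{\textrm{out-}\ii{d}}\bigl(\RS(\s,\FD),\RS(\s,\FD\cap\HA)\bigr)\]
by a direct adaptation of the proof of Lemma~\ref{lem:lef}. With $\zeta$ the central projection of $\OD$ to $\R^d$ and $\nu$ a generic perturbation of the outer normal to the halfspace $\zeta(\OD\cap\FD^\pm)\subset\R^d$, I order the vertices of $\s$ lying in $\OD\cap\intx\FD^\pm$ by decreasing $\nu$ and remove them one at a time. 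At each such vertex $v_i$ we have $v_i\notin H$, so $\Lk(v_i,\s)$ is the combinatorial stratification of $S^{d-1}=\RN^1_{v_i}S^d$ given by the fine extension $\Lk(v_i,\EH)$ of the $\cc$-arrangement $\Lk(v_i,\HA)$, unaffected by $H$; the link $\Lk(v_i,\Sigma_i)$ of the intermediate complex identifies with the restriction of $\Lk(v_i,\s)$ to the general-position hemisphere $\RN^1_{v_i}\FD_{v_i}$, and Theorem~\ref{thm:hemisphere}, Lemma~\ref{lem:hemisphere}, and Lemma~\ref{lem:outcoll} then yield the out-$\ii{d}$ collapsibility of this step exactly as in Lemma~\ref{lem:lef}. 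Call the resulting Morse matchings $\varPhi_+$ and $\varPhi_-$.

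Finally, splice $\varPhi_+$ and $\varPhi_-$ into a single matching $\varPhi:=\varPhi_+\cup\varPhi_-$ on $\RS(\s,S^d\setminus(\OD\cap H))$. Each non-critical face of $\varPhi_\pm$ has its relative interior inside $\OD\cap\intx\FD^\pm$, so the matching pairs of $\varPhi_+$ and $\varPhi_-$ occupy disjoint sets of faces and $\varPhi$ is a well-defined discrete vector field whose critical faces are precisely those of $\RS(\s,\FD)$. Any gradient path of $\varPhi$ that crossed between the two sides would have to traverse a face in $\RS(\s,\FD)=\RS(\s,\FD\cup\intx\FD^+)\cap\RS(\s,\FD\cup\intx\FD^-)$, but every such face is critical and hence absent from the matching; so every gradient path is confined to $\varPhi_+$ or to $\varPhi_-$, and acyclicity is inherited from each. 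The outwardly matched faces of $\varPhi$ with respect to the arrangement subcomplex are the union of those of $\varPhi_\pm$, all of dimension $\ii{d}$, giving the required out-$\ii{d}$ collapse.

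The step I expect to be the main obstacle is the link identification inside each of the two single-sign collapses above: unlike the complex $\RS(\s,\FD\cup\FD^\pm)$ treated by Lemma~\ref{lem:lef}, the complex $\RS(\s,\FD\cup\intx\FD^\pm)$ excludes faces whose closures meet $\OD\cap H$, and one must verify that this exclusion is compatible with the identification $\Lk(v_i,\Sigma_i)=\Lk(v_i,\s)\cap\RN^1_{v_i}\FD_{v_i}$. The idea is that, for $\nu$ as chosen, the excluded link directions at $v_i\in\OD\cap\intx\FD^\pm$ correspond to tangent directions crossing $H$ into its open half $\OD\cap H$ and lie outside the hemisphere $\RN^1_{v_i}\FD_{v_i}$, making the exclusion invisible to the link and preserving the identification used in the proof of Lemma~\ref{lem:lef}.
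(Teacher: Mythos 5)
Your proposal is correct and follows the route the paper intends: Corollary~\ref{cor:lef} is stated without proof precisely because it is Lemma~\ref{lem:lef} applied once to each of the two closed hemispheres $\FD^{\pm}$ bounded by $H$ (both of which satisfy the lemma's general position hypothesis, since that hypothesis concerns only $\partial\FD^{\pm}=H$), with the two collapses concatenated. The only superfluous part is your re-derivation of the lemma for $\FD\cup\intx\FD^{\pm}$: by transversality of $H$ no face of $\s$ lying in $S^d{\setminus}(\OD\cap H)$ can meet $\OD\cap H$, so $\RS(\s,\FD\cup\intx\FD^{\pm})=\RS(\s,\FD\cup\FD^{\pm})$ and Lemma~\ref{lem:lef} can be cited verbatim.
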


\begin{theorem}\label{thm:lef}
Consider any affine $\cc$-arrangement $\HA$ in $\R^d$, and any hyperplane $H$ in $\R^d$ in general position with respect to $\HA$. Then the complement $\HA^{\comp}$ of $\HA$ is homotopy equivalent to $H\cap\HA^{\comp}$ with $e$-cells attached to it, where $e = \lceil\nicefrac{d}{\cc}\rceil =d-\lfloor\nicefrac{d}{\cc}\rfloor$.
\end{theorem}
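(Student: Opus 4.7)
The plan is to transport the out-$\ii{d}$ collapse from Corollary~\ref{cor:lef} through the complement-matching duality of Section~\ref{ssc:cmpm} to a Morse matching on a model of $\HA^{\comp}$, and then invoke the strong Morse theorem (Theorem~\ref{thm:MorseThm}). The numerical identity $d-\ii{d}-1 = d-\lfloor d/\cc\rfloor = e$ is the reason an out-$\ii{d}$ collapse on the stratification side dualizes to attachments of $e$-cells on the complement side.

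First I would pass to the spherical setting as in Definition~\ref{def:affine}. Choose a radial projection $\rho$ sending $\HA$ onto a codim-$\cc$-arrangement $\HA'$ in an open hemisphere $\OD\subset S^d$, and set $\FD:=S^d\setminus\OD$, $H' := \SSp(\rho(H))$. By the general position of $H$ with respect to $\HA$, there is a fine hyperplane extension $\EH$ of $\HA'$ containing both $\partial\FD$ and $H'$; let $\s := \s(\EH)$. Applying Corollary~\ref{cor:lef} then yields an out-$\ii{d}$ collapse
\[
\bigl(\RS(\s, S^d\setminus(\OD\cap H')),\, \RS(\s, \HA'\cap(S^d\setminus(\OD\cap H')))\bigr) \searrow_{\textrm{out-}\ii{d}} (\RS(\s,\FD),\,\RS(\s,\FD\cap\HA')).
\]
I extend this collapsing sequence to a Morse matching $\varPhi$ on the whole stratification $\s$ by declaring the remaining faces (those with interior in $\OD\cap H'$) to be critical. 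The outwardly matched faces of $\varPhi$ with respect to $(\s,\RS(\s,\HA'))$ remain precisely those produced by the collapse, hence are all of dimension $\ii{d}$.

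Next I would dualize via the complement matching. Let $\K := \K(\HA',\s)$ and set $\K_\OD := \RS^\ast(\K,\OD)$; by Definition~\ref{def:affine} this is a complement complex for the affine arrangement $\HA$, and thus a model for $\HA^{\comp}$. Every noncritical face of $\varPhi$ arose from the collapse of $\RS(\s,S^d\setminus(\OD\cap H'))$ onto $\RS(\s,\FD)$, so lies outside $\FD$ and therefore meets $\OD$. Consequently the second form of Theorem~\ref{thm:relcoll} applies with $M=\OD$ and produces a Morse matching $\varPhi^\ast_{\K_\OD}$ on $\K_\OD$ whose critical $i$-faces are in bijection with the union of (a) critical $(d-i)$-faces of $\varPhi$ that meet $\OD$ and lie outside $\RS(\s,\HA')$, and (b) outwardly matched $(d-i-1)$-faces of $\varPhi$. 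Since faces in $\RS(\s,\FD)$ are contained in $\FD$ and thus disjoint from $\OD$, (a) reduces to faces $\sigma$ with $\rint\sigma\subset(\OD\cap H')\setminus\HA'$, while (b) forces $i = d-\ii{d}-1 = e$.

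Finally, let $D$ be the subcomplex of $\K_\OD$ that is naturally identified (via the inclusion $H'\hookrightarrow S^d$) with a complement complex for the affine $\cc$-arrangement $H\cap\HA$ inside $H$; it collects exactly the duals of the type (a) critical faces and is a model for $H\cap\HA^{\comp}$. Granted that $\varPhi^\ast_{\K_\OD}$ has no outwardly matched faces with respect to the pair $(\K_\OD,D)$, Theorem~\ref{thm:MorseThm} then yields that $\K_\OD$ is homotopy equivalent to $D$ with one $e$-cell attached for each outwardly matched $\ii{d}$-face of $\varPhi$. Together with $\K_\OD\simeq\HA^{\comp}$ and $D\simeq H\cap\HA^{\comp}$, this is precisely the asserted Lefschetz statement. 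The step I expect to be the main obstacle is this last identification: one must exploit the product structure that the dual block (in $\s^\ast$) of a stratum lying in $H'$ carries — namely, the $H'$-dual block times a transverse interval — in order to realize $D$ as a deformation retract of a neighborhood of $H'$ in $\K_\OD$ and to verify that the complement matching respects this embedding without producing stray outward matchings across $(\K_\OD,D)$.
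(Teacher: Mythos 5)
Your proposal follows essentially the same route as the paper's proof: pass to the sphere via Definition~\ref{def:affine}, apply Corollary~\ref{cor:lef} to get an out-$\ii{d}$ collapse, dualize through Theorem~\ref{thm:relcoll} to a complement matching on $\RS^\ast(\K(\HA',\s),\OD)$ with the faces of $\RS^\ast(\K(\HA',\s),\OD\cap H_\rho)$ critical and the remaining critical faces in dimension $e$, and conclude with Theorem~\ref{thm:MorseThm}. The final identification you flag as the main obstacle is exactly the step the paper takes for granted (writing $\RS^\ast(\K(\HA',\s),\OD\cap H_\rho)\simeq H\setminus\HA$ as an instance of the Bj\"orner--Ziegler model lemma), so your argument matches the published one.
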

\begin{proof}
Define $\rho$, $\OD$ and $\HA'$ in $S^d$ as in Definition~\ref{def:affine}, and define the hyperplane $H_\rho:=\SSp(\rho(H))\subset S^d$ induced by $H$ in $\R^d$. Let $\FD:=\OD^{\comp}$ denote the closed hemisphere complementary to $\OD$. Let $\s$ be a generic combinatorial stratification of $S^d$ induced by $\HA'$. By Corollary~\ref{cor:lef}, \[\big(\RS(\s, S^d{\setminus} (H_\rho \cap \OD)),\RS(\s, \HA'\cap S^d{\setminus} (H_\rho \cap \OD)\big)  \searrow_{\textrm{out-}\ii{d}} (\RS(\s,\FD), \RS(\s,\FD\cap \HA')).\]
The associated out-$\ii{d}$ collapsing sequence gives a Morse matching $\varPhi$ on $\s$ with the following properties:
\begin{compactitem}[$\circ$]
\item the critical faces of $\varPhi$ are the faces of $\RS(\s,\FD)$ and the faces of $\s$ that intersect $H_\rho \cap \OD$;
\item the outwardly matched faces of the pair $(\s, \RS(\s, \HA'))$ are all of dimension $\ii{d}$.
\end{compactitem}
By Theorem~\ref{thm:relcoll} the restriction $\varPhi^{\ast}_{\RS^\ast(\K(\HA',\s), \OD)}$ of the complement matching induced by $\varPhi$ to the complex $\RS^\ast(\K(\HA',\s), \OD)$ has the following critical faces:
\begin{compactitem}[$\circ$]
\item the faces of ${\RS^\ast(\K(\HA',\s), \OD\cap H_\rho)}$, whose duals are critical faces of $\varPhi$, and
\item the critical faces of dimension $e=\lceil\nicefrac{d}{\cc}\rceil=d-\ii{d}-1$, which correspond to the outwardly matched faces of $\varPhi$.
\end{compactitem}
Furthermore, since the faces of $\RS^\ast(\K(\HA',\s), \OD\cap H_\rho)$ are critical in $\varPhi^{\ast}_{\RS^\ast(\K(\HA',\s), \OD)}$, this Morse matching has no outwardly matched faces with respect to the pair \[\big(\RS^\ast(\K(\HA',\s), \OD),\RS^\ast(\K(\HA',\s), \OD\cap H_\rho)\big)\]
Thus, by Theorem~\ref{thm:MorseThm}, we have that \[\RS^\ast(\K(\HA',\s), \OD)\simeq \R^d{\setminus} \HA\] is, up to homotopy equivalence, obtained from \[\RS^\ast(\K(\HA',\s), \OD\cap H_\rho) \simeq H{\setminus} \HA\] by attaching $e$-dimensional cells, as desired.
\end{proof}

\section{Proof of Theorem~\ref{MTHM:BZP}}

We start with an easy consequence of the formula of Goresky--MacPherson. For completeness, we provide the (straightforward) proof in the~appendix.

\begin{lemma}\label{LEM:LHTCA}
Let $\HA$ denote a subspace arrangement in $S^d$, let $\OD$ denote an open hemisphere of $S^d$, and let $H$ be a hyperplane in $S^d$. Then we have the following:
\begin{compactenum}[\rm (I)]
\item If $\OD$ is in general position with respect to $\HA$, then for all $i$, $\beta_i(S^d{\setminus} \HA)\ge\beta_i(\OD{\setminus} \HA).$
\item If $H$ is in general position with respect to $\HA$ and $\OD$, then for all $i$, $\beta_i(\OD{\setminus} \HA)\ge\beta_i((\OD\cap H){\setminus} \HA).$
\end{compactenum}
\end{lemma}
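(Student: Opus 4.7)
The plan is to deduce both inequalities directly from the Goresky--MacPherson formula by passing from the spherical setup to Euclidean complements via central projection and coning, and then comparing intersection lattices term by term.

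For (I), I would first identify $\OD{\setminus}\HA$ with $\R^d{\setminus}\HA'$, where $\HA'$ is the affine arrangement obtained by centrally projecting $\OD$ onto a fixed affine chart. The general position assumption on $\OD$ ensures that no non-empty intersection of elements of $\HA$ lies inside $\partial\OD$, so each such intersection projects to a non-empty affine intersection of the same codimension; this should yield a codimension-preserving isomorphism $L(\HA)\cong L(\HA')$ of intersection lattices. In parallel, radial projection from the origin realizes $S^d{\setminus}\HA$ as a deformation retract of $\R^{d+1}{\setminus}\widehat{\HA}$, where $\widehat{\HA}$ is the central linear coning of $\HA$ (each subsphere $h=V_h\cap S^d$ is replaced by its spanning linear subspace $V_h\subset\R^{d+1}$). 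I expect $L(\widehat{\HA})$ to coincide with $L(\HA)$ under $V\mapsto V\cap S^d$, except that in the essential case it carries one additional top element $\{0\}$ of codimension $d+1$. Applying the Goresky--MacPherson formula
\[
\tilde\beta^{\,i}(\R^n{\setminus}\mathscr{B}) \;=\; \sum_{x>\hat{0}}\tilde\beta_{\mathrm{codim}(x)-i-2}\bigl(\Delta(\hat{0},x)\bigr)
\]
to both $\widehat{\HA}\subset\R^{d+1}$ and $\HA'\subset\R^d$, all terms indexed by the common poset agree, and the only remaining discrepancy is the non-negative term $\tilde\beta_{d-i-1}(\Delta(\hat{0},\{0\}))$ (zero when $\HA$ is non-essential), which gives (I).

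For (II), I would analogously identify $(\OD\cap H){\setminus}\HA$ with $\R^{d-1}{\setminus}\HA'_H$, where $\HA'_H$ is the restriction of $\HA'$ to the affine hyperplane $H\cap\OD\cong\R^{d-1}$. General position of $H$ forces $x\cap H$ to be non-empty and of the same codimension as $x$ for every $x\in L(\HA')$ with $\dim(x)\ge 1$, while $x\cap H=\emptyset$ precisely when $\dim(x)=0$. Hence $L(\HA'_H)$ is canonically isomorphic to the subposet $\{x\in L(\HA'):\dim(x)\ge 1\}$, with matching codimensions and identical lower intervals $\Delta(\hat{0},x)$. Subtracting the two Goresky--MacPherson expansions then yields
\[
\tilde\beta^{\,i}(\R^d{\setminus}\HA')-\tilde\beta^{\,i}(\R^{d-1}{\setminus}\HA'_H) \;=\; \sum_{\substack{x\in L(\HA')\\ \dim(x)=0}}\tilde\beta_{d-i-2}\bigl(\Delta(\hat{0},x)\bigr)\;\ge\;0,
\]
which proves (II).

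The main technical step will be the careful bookkeeping of intersection lattices: verifying that central projection (respectively intersection with $H$) induces codimension-preserving poset isomorphisms on the relevant subposets, and that the essential versus non-essential dichotomy accounts correctly for the extra contribution in the spherical case. Once this poset-level dictionary is in place, both inequalities fall out as sums of non-negative reduced Betti numbers indexed by the ``missing'' lattice elements---the origin in the cone of~(I), and the zero-dimensional flats (arrangement vertices) in~(II).
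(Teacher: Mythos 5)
Your proposal is correct and follows essentially the same route as the paper: pass from the sphere to affine complements via linear coning and central projection, apply the Goresky--MacPherson formula, and observe that the intersection posets differ only in maximal elements (the origin of the cone, resp.\ the zero-dimensional flats missed by a generic hyperplane), so the lower intervals and hence the summands match and the difference of Betti numbers is a sum of non-negative terms. The only cosmetic difference is that the paper folds both parts into a single ``truncation of the intersection poset'' claim for an affine arrangement cut by a generic hyperplane (treating the tangent hyperplane $H_{\TT}$ in $\R^{d+1}$ as that hyperplane for part (I)), whereas you carry out the two lattice comparisons separately.
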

Besides this lemma, we need the following elementary concept. For any convex set $\sigma$ and any hyperplane $H$ in $S^d$, let us denote by $\sigma^H$ the intersection of $\sigma$ with $H$. If $C$ is any collection of polyhedra in~$S^d$, we define $C^H:=\bigcup_{\sigma\in C} \sigma^H$.

\begin{example}[Lifting a Morse matching]
Let $\varSigma$ be a polyhedron in $S^d$. Let $H$ denote a general position hyperplane in $S^d$. Then $\varSigma^H$ is a polyhedron in $H$, and if $\sigma^H$ is a facet of $\varSigma^H$, then there exists a unique facet $\sigma$ of $\varSigma$ with the property that $\sigma^H:=\sigma\cap H$.

Consider now a subcomplex $C$ of a combinatorial stratification of $S^d$, and a Morse matching $\varphi$ on the complex~$C^H$. Then we can match $\sigma$ with $\varSigma$ for every matching pair $(\sigma^H, \varSigma^H)$ in the matching $\varphi$ of $C^H$. This gives rise to a Morse matching $\varPhi$ on $C$ from a Morse matching $\varphi$ on $C^H$, the \Defn{lift} of $\varphi$ to $C$.

\begin{figure}[htbf]
\centering 
 \includegraphics[width=0.64\linewidth]{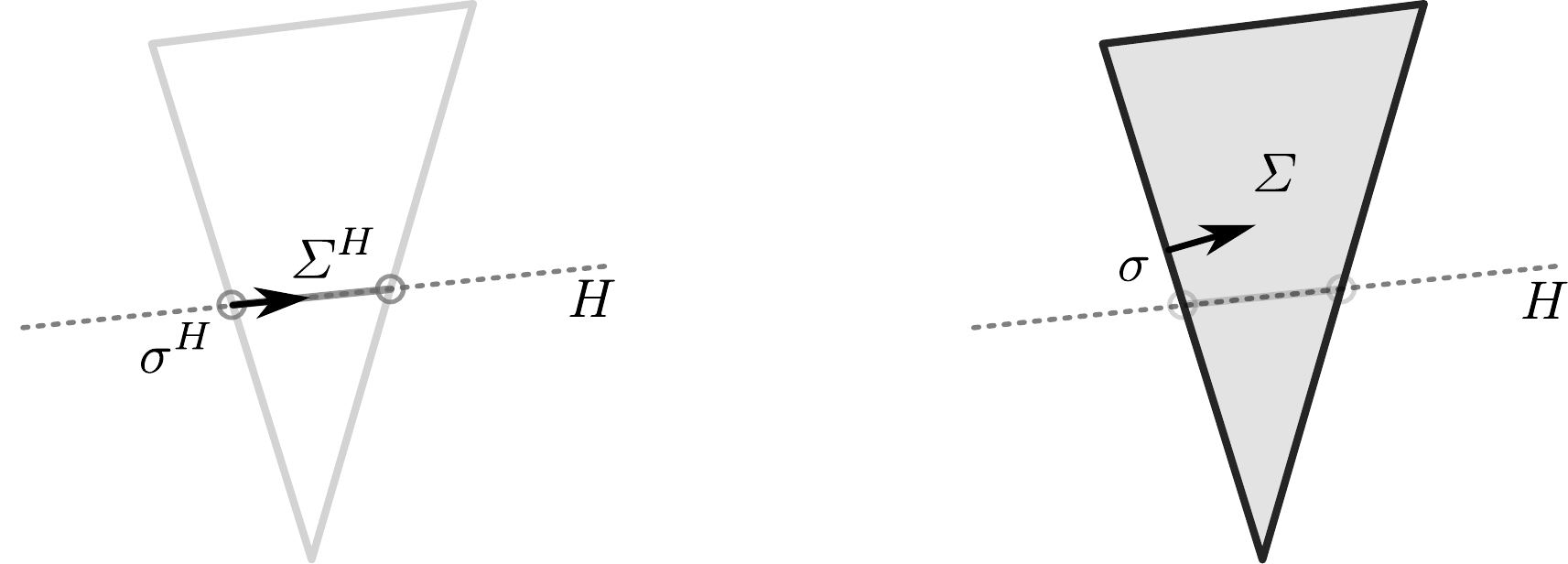} 
 \caption{\small The lift of a matching.}
\label{fig:lift}
\end{figure}
\end{example}

\begin{lemma}\label{lem:1} Let $\FD$ denote a closed hemisphere of $S^d$, and let $\OD:=\FD^{\comp}$ be its complement. Let $\HA$ be a $\cc$-arrangement w.r.t.\ $\OD$, and let $\s$ be a combinatorial stratification of $S^d$ induced by $\HA$. Finally, let $\K:=\K(\HA,\s)$ denote the associated complement complex. 

Then, there exists a Morse matching $\varPsi$ on $\s$ whose critical faces are the subcomplex $\RS(\s, \FD)$ and some additional facet of $\s$ such that the restriction $\varPsi^{\ast}_{\RS^\ast(\K,\OD)}$ of the complement matching to ${\RS^\ast(\K,\OD)}$ is perfect.
\end{lemma}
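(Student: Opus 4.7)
I propose proving Lemma~\ref{lem:1} by induction on $d$. The base case is small $d$, where $\HA$ is forced to be empty; then $\RS^\ast(\K,\OD)$ is contractible and one constructs a collapsing sequence from $\s$ down to $\RS(\s,\FD)$ plus one $d$-face of $\s$ via Lemma~\ref{lem:hemisphere} applied to $\OD$ (leaving one critical facet by hand). For the inductive step, choose a hyperplane $H\subset S^d$ in general position with $\s(\EH\cup\partial\FD)$; we may refine $\EH$ so that $\partial\FD\in\EH$, ensuring no face of $\s$ straddles $\partial\FD$. The trace $\HA^H:=\{h\cap H:h\in\HA\}$ is a $\cc$-arrangement w.r.t.~$H\cap\OD$ in the $(d-1)$-sphere $H$, and the corresponding trace stratification $\s^H$ is combinatorial. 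By the inductive hypothesis there is a Morse matching $\varphi$ on $\s^H$ whose critical faces are $\RS(\s^H,H\cap\FD)\cup\{F_H\}$ for a $(d-1)$-facet $F_H\subset H\cap\OD$, with perfect complement matching on $\RS^\ast(\K(\HA^H,\s^H),H\cap\OD)$.

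Next, lift $\varphi$ to a matching $\widetilde\varphi$ on $\s$ via the construction of the preceding Example. Under the no-straddling assumption, the critical faces of $\widetilde\varphi$ are exactly the faces of $\RS(\s,\FD)$ (their $H$-intersections, when nonempty, lie in $\RS(\s^H,H\cap\FD)$), the unique $d$-facet $\tau\subset\OD$ with $\tau\cap H=F_H$, and the faces of $\s$ lying in $\OD\setminus H$. To eliminate this last family, apply Corollary~\ref{cor:lef}: the out-$\ii{d}$ collapse of $(\RS(\s,S^d{\setminus}(\OD\cap H)),\RS(\s,\HA\cap S^d{\setminus}(\OD\cap H)))$ to $(\RS(\s,\FD),\RS(\s,\FD\cap\HA))$ supplies matching pairs entirely inside $\RS(\s,\OD\setminus H)$, with all outwardly matched faces in dimension $\ii{d}$. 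Combining these with $\widetilde\varphi$ yields a matching $\varPsi$ on $\s$ with critical faces exactly $\RS(\s,\FD)\cup\{\tau\}$.

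To verify perfection of $\varPsi^\ast_{\RS^\ast(\K,\OD)}$, invoke Theorem~\ref{thm:relcoll}. The unique critical face of $\varPsi$ meeting $\OD$ and outside $\RS(\s,\HA)$ is $\tau$, giving $c_0=1=\beta_0$. For $i\geq 1$, $c_i$ counts the outwardly matched $(d-i-1)$-faces of $\varPsi$, which decompose as (a) lifts of outwardly matched faces of $\varphi$ (dimension-shifted by one), contributing $\beta_i(H\cap\OD\setminus\HA^H)$ in each dimension by the inductive perfection, plus (b) the $N$ extra outwardly matched $\ii{d}$-faces from Corollary~\ref{cor:lef}, which only contribute to $c_e$ with $e=\lceil d/\cc\rceil=d-\ii{d}-1$. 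By Theorem~\ref{thm:lef}, $\OD\setminus\HA$ is obtained from $(H\cap\OD)\setminus\HA^H$ by attaching $e$-cells, so $\beta_i(\OD\setminus\HA)=\beta_i((H\cap\OD)\setminus\HA^H)$ for $i\notin\{e-1,e\}$; combined with Lemma~\ref{LEM:LHTCA}(II) at $i=e-1$ (the cell-attaching can only decrease $\beta_{e-1}$, while the lemma gives the reverse inequality), equality also holds at $e-1$. Hence $c_i=\beta_i$ for all $i\ne e$, and $c_e=\beta_e$ follows automatically from the identity $\sum_i(-1)^ic_i=\chi(\OD\setminus\HA)=\sum_i(-1)^i\beta_i$.

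The principal technical obstacle is verifying that the combined $\varPsi$ is genuinely a Morse matching, i.e.~contains no closed gradient paths. Although the lift pairs (faces meeting $H$) and the Corollary~\ref{cor:lef} pairs (faces strictly in $\OD\setminus H$) have disjoint supports, a hypothetical closed gradient path could in principle alternate between the two families via facets of $d$-faces that are shared across the hyperplane. Resolving this requires coordinating the vertex-sweep order chosen in the proof of Corollary~\ref{cor:lef} (and inherited from Lemma~\ref{lem:lef}) with the transversal structure of $H$, so that every collapse step in $\OD\setminus H$ proceeds monotonically away from $H$ toward $\FD$. A secondary point, to be addressed separately, is the reduction that places $\partial\FD$ in $\EH$: this calls for a short argument showing that the statement of the lemma is preserved under refinement of the extension, so that any given $\s$ can be handled by first refining and then transferring the matching back.
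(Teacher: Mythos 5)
Your proposal follows the paper's proof essentially step for step: induction on $d$ via a generic hyperplane $H$, lifting the inductively obtained matching $\varphi$ from $\s^H$ to the faces of $\s$ meeting $H$, adjoining the out-$\ii{d}$ collapsing sequence of Corollary~\ref{cor:lef} for the remaining faces, and then verifying perfection by playing the attached $e$-cells against Lemma~\ref{LEM:LHTCA}(II). (Your finish differs cosmetically: the paper argues directly that each attached $e$-cell must add a generator, which gives $c_e=\beta_e$ at once, whereas you get $c_i=\beta_i$ for $i\ne e$ first and recover $c_e$ from the Euler characteristic; both are fine.)

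The two ``obstacles'' you defer are not where you think they are. For the acyclicity of $\varPsi$: the matching pairs of the lift involve only faces meeting $\OD\cap H$, while the pairs coming from Corollary~\ref{cor:lef} involve only faces of $\RS(\s,S^d{\setminus}(\OD\cap H))$, i.e.\ faces disjoint from $\OD\cap H$; since a subface of a face disjoint from $\OD\cap H$ is again disjoint from $\OD\cap H$, a gradient path that ever reaches the second family can never return to the first, so any closed gradient path would lie entirely in one of the two families --- and each is separately acyclic (the lift of an acyclic matching is acyclic, and a collapsing sequence is acyclic). No coordination of the sweep order in Lemma~\ref{lem:lef} with $H$ is needed. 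For the second point, the refinement placing $\partial\FD$ in $\EH$ should simply be dropped: the lemma is a statement about the \emph{given} stratification $\s$, and transferring a Morse matching back from a subdivision is not a routine operation, so this detour creates a real gap rather than closing one. It is also unnecessary --- the dichotomy ``meets $\OD\cap H$ / does not meet $\OD\cap H$'' already assigns every non-critical face to exactly one of the two matchings, regardless of whether faces straddle $\partial\FD$ (a straddling face disjoint from $\OD\cap H$ lies in $\RS(\s,S^d{\setminus}(\OD\cap H))$ and is removed by the collapse of Corollary~\ref{cor:lef}; one meeting $\OD\cap H$ is matched by the lift, since its trace meets $\OD\cap H$ and is therefore non-critical for $\varphi$). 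With these two adjustments your argument coincides with the paper's.
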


\begin{proof}
We abbreviate $\RS^\ast[M]:=\RS^\ast(\K,M)$ for any set $M$ in $S^d$. We prove the lemma by induction on the dimension; The case of $d=0$ is clearly true, since in this case $\RS(\s, \OD)$ is just a vertex. Assume now $d\ge 1$. Let $H$ denote a generic hyperplane in $S^d$. By induction on the dimension, there exists a Morse matching $\varphi$ on $\s^H$ such that the restriction $\varphi^{\ast}_{\RS^\ast(\K(\HA^H,\s^H),\OD\cap H)}$ of the complement matching induced by $\varphi$ to $\RS^\ast(\K(\HA^H,\s^H),\OD\cap H)$ is perfect on the latter. We lift $\varphi$ to a Morse matching $\varPhi$ of the faces of $\s$ intersecting~$H$.
Now, by Corollary~\ref{cor:lef}, 
\begin{equation}\tag{$\dagger$} \label{eq:co}
\big(\RS(\s, S^d{\setminus} (\OD \cap H)),\RS(\s,  \HA \cap S^d{\setminus} (\OD \cap H))\big)  \searrow_{\textrm{out-}\ii{d}} (\RS(\s,\FD), \RS(\s,\FD\cap \HA)).
\end{equation}
Denote the associated out-$\ii{d}$ collapsing sequence by $X$. Define the Morse matching $\varPsi$ as the union of the Morse matchings $\varPhi$ and $X$. We claim that $\varPsi$ gives the desired Morse matching on $\s$.

The Morse matching $\varPsi$ has the desired critical faces by construction, so it remains to show that $\varPsi^{\ast}_{\RS^\ast[\OD]}$ is perfect. By construction, $\varPsi^{\ast}_{\RS^\ast[\OD]}$ contains no outwardly matched faces with respect to the pair $(\RS^\ast[\OD],\RS^\ast[\OD\cap H])$. Furthermore, Observation~\eqref{eq:co} and Theorem~\ref{thm:relcoll} show that every critical face of $\varPsi^{\ast}_{\RS^\ast[\OD]}$ that is not in $\RS^\ast[\OD\cap H]$ is of dimension $e:=\lceil\nicefrac{d}{\cc}\rceil=d-\ii{d}-1$. Theorem~\ref{thm:MorseThm} now shows that the complex $\RS^\ast[\OD]$ is obtained, up to homotopy equivalence, from the complex $\RS^\ast[\OD\cap H]$ by attaching cells of dimension $e$. The attachment of each of these cells contributes to the homology of $\RS^\ast[\OD]$ either by deleting a generator of the homology in dimension $e-1$, or by adding a generator for the homology in dimension $e$. But if any of the $e$-cells deletes a generator in homology, then 
\[\beta_{e-1}(\OD{\setminus} \HA)=\beta_{e-1}(\RS^\ast[\OD])<\beta_{e-1}(\RS^\ast[\OD\cap H])=\beta_{e-1}((\OD\cap H){\setminus} \HA),\]
in contradiction with Lemma~\ref{LEM:LHTCA}(II). Thus, every $e$-cell attached must add a generator in homology. Since \[c_i(\varPhi^{\ast}_{\RS^\ast[\OD\cap H]})=c_i (\varphi^{\ast}_{\RS^\ast(\K(\HA^H,\s^H),\OD\cap H)}) =\beta_i((\OD\cap H){\setminus} \HA)\] for all $i$, we consequently have
$c_i(\varPsi^{\ast}_{\RS^\ast[\OD]})=\beta_i\big(\OD{\setminus} \HA\big)$
for all $i$, as desired.
\end{proof}

\begin{theorem} \label{thm:BZperfect}
Any complement complex of any $\cc$-arrangement $\HA$ in $S^d$ or~$\R^d$ admits a perfect Morse matching.
\end{theorem}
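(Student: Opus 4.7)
The affine case of Theorem~\ref{thm:BZperfect} falls out of Lemma~\ref{lem:1} directly: by Definition~\ref{def:affine}, the complement complex of an affine $\cc$-arrangement $\HA$ in $\R^d$ is exactly $\RS^\ast(\K(\HA',\s),\OD)$ for the spherical extension $\HA'$ and open hemisphere $\OD$ of the radial projection, and Lemma~\ref{lem:1} produces a Morse matching on $\s$ whose complement matching restricts to a perfect matching on precisely this subcomplex.

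For the spherical case I proceed by induction on $d$. The base cases $d\leq 1$ are immediate since $\cc=2>d$ forces $\HA$ to be empty, so $\K\simeq S^d$ admits an obvious perfect Morse matching with one $0$-cell and one $d$-cell. For the inductive step $d\geq 2$, I choose a generic closed hemisphere $\FD\subset S^d$ with open complement $\OD$. Any genuine spherical $\cc$-arrangement is in particular a $\cc$-arrangement w.r.t.\ $\OD$, so Lemma~\ref{lem:1} supplies a Morse matching $\varPsi_0$ on $\s$ whose critical faces are $\RS(\s,\FD)\cup\{G\}$ and whose complement matching is perfect on $\RS^\ast(\K,\OD)$ with $c_i=\beta_i(\OD\setminus\HA)$.

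To upgrade $\varPsi_0$ to a matching whose complement matching is perfect on all of $\K$, I then further collapse inside the hemisphere $\FD$ using Theorem~\ref{thm:hemisphere}(A) (taken in the degenerate case $k=d$, $H=S^d$): this out-$\ii{d}$ collapses the pair $(\RS(\s,\FD),\RS(\s,\FD\cap\HA))$ down to a single vertex $v\in\RS(\s,\FD\cap\HA)$, introducing outwardly matched faces only in dimension $\ii{d}$. Since every face of $\RS(\s,\FD)$ was critical for $\varPsi_0$, these new matching pairs can be adjoined to $\varPsi_0$ without conflict, producing a single Morse matching $\varPsi$ on $\s$ whose critical faces sit inside $\RS(\s,\FD\cap\HA)\cup\{v,G\}$ together with outwardly matched $\ii{d}$-faces from both $\varPsi_0$ and the hemisphere collapse.

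By Theorem~\ref{thm:relcoll}, the critical $i$-cells of the complement matching $\varPsi^\ast_\K$ are precisely the duals of critical $(d-i)$-faces of $\varPsi$ outside $\RS(\s,\HA)$ together with the outwardly matched $(d-i-1)$-faces of $\varPsi$. A Betti-number accounting in the spirit of the final step of the proof of Lemma~\ref{lem:1} --- using Lemma~\ref{LEM:LHTCA}(I) to prevent the attached cells from cancelling generators in homology --- should force the critical cell counts to equal $\beta_i(S^d\setminus\HA)$ in every dimension. The main obstacle I anticipate is twofold: first, verifying that $\varPsi$ remains a Morse matching (no closed gradient path can form when the two matchings are superimposed across $\partial\FD$), and second, pushing the Betti-number accounting through in \emph{all} dimensions simultaneously rather than only in the single dimension $e=\lceil d/\cc\rceil$ that sufficed in the affine Lefschetz picture, since $\beta_i(\OD\setminus\HA)$ and $\beta_i(S^d\setminus\HA)$ can disagree in several dimensions at once.
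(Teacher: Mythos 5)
Your argument is essentially the paper's own proof: the affine case is read off from Lemma~\ref{lem:1}, and the spherical case adjoins to the matching of Lemma~\ref{lem:1} the out-$\ii{d}$ collapsing sequence of the pair $(\RS(\s,\FD),\RS(\s,\FD\cap\HA))$ supplied by Theorem~\ref{thm:hemisphere}(A) (with $H=S^d$, which is a legitimate extension of any element of $\HA$), and then runs the Betti-number count against Lemma~\ref{LEM:LHTCA}(I). Neither of your two anticipated obstacles is genuine. For the first: every face of $\RS(\s,\FD)$ is critical for $\varPsi_0$, so the two matchings are supported on disjoint sets of faces; since $\RS(\s,\FD)$ is a subcomplex, a gradient path of the union that enters it can never leave it, so any closed gradient path would live entirely inside one of the two constituent (acyclic) matchings. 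For the second: every critical cell of the resulting complement matching that is not already among the $\beta_i(\OD{\setminus}\HA)$ critical cells on $\RS^\ast(\K,\OD)$ is dual to an outwardly matched $\ii{d}$-face, hence lies in the \emph{single} dimension $e=d-\ii{d}-1$; Theorem~\ref{thm:MorseThm} then exhibits $\K$ as $\RS^\ast(\K,\OD)$ with only $e$-cells attached, Lemma~\ref{LEM:LHTCA}(I) applied in dimension $e-1$ forces each such cell to create rather than kill a homology generator, and this simultaneously yields $\beta_i(S^d{\setminus}\HA)=\beta_i(\OD{\setminus}\HA)$ for all $i\neq e$ and the correct count in dimension $e$ --- exactly the one-dimension argument from the end of Lemma~\ref{lem:1}. (Your induction on $d$ in the spherical case is harmless but unnecessary; the induction already lives inside Lemma~\ref{lem:1}.)
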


\begin{proof}
We distinguish two cases: (1) the case of affine $\cc$-arrangements in $\R^d$, and (2) the case of $\cc$-arrangements in $S^d$.

\begin{compactenum}[(1)]
\item If $\HA$ is any affine $\cc$-arrangement in $\R^{d}$, consider a radial projection $\rho$ of $\R^{d}$ to an open hemisphere $\OD$ in $S^d$, and the arrangement $\HA'$ induced by $\HA$, as defined in Definition~\ref{def:affine}. Any complement complex of the affine $2$-arrangement $\HA$ is of the form $\RS^\ast(\K(\HA',\s),\OD)$, where $\s$ is some combinatorial stratification induced by $\HA'$. The Morse matching $\varPsi^{\ast}_{\RS^\ast(\K(\HA',\s),\OD)}$ constructed in Lemma~\ref{lem:1} provides a perfect Morse function on it.

\item Suppose instead $\HA$ is any $\cc$-arrangement in $S^d$. Any complement complex of $\HA$ is of the form $\K(\HA,\s)$, where $\s$ is some combinatorial stratification induced by $\HA$. Let $\FD$ denote a generic closed hemisphere in $S^d$. By Lemma~\ref{lem:1}, there is a Morse matching $\varPsi$ on $\s$ whose critical faces are the subcomplex $\RS(\s, \FD)$ and some additional facet of $\s$ such that the restriction $\varPsi^{\ast}_{\RS^\ast(\K(\HA,\s),\OD)},\ \OD:=\FD^{\comp},$ of the complement matching to $\RS^\ast(\K(\HA,\s),\OD)$ is perfect. 

By Theorem~\ref{thm:hemisphere}(A), $\RS^\ast(\s,\FD)$ is out-$\ii{d}$ collapsible; by considering the union of the associated out-$\ii{d}$ collapsing sequence with the matching $\varPsi$, we obtain a Morse matching $\varOmega$ on $\K(\HA,\s)$ such that only outwardly matched faces of dimension $\ii{d}$ with respect to the pair $(\s,\RS(\s,\HA))$ are added when passing from $\varPsi$ to $\varOmega$. By Theorems~\ref{thm:relcoll} and~\ref{thm:MorseThm}, $\K(\HA,\s)$ is obtained from $\RS^\ast(\K(\HA,\s),\OD)$, up to homotopy equivalence, by attaching $e$-dimensional cells, where $e:=\lceil\nicefrac{d}{\cc}\rceil=d-\ii{d}-1$. Each of these cells can either add a generator for homology in dimension $e$, or delete a generator for the homology in dimension $e-1$. But if one of the cells deletes a generator, then \[\beta_{e-1}(\RS^\ast(\K(\HA,\s),\OD))<\beta_{e-1}(\K(\HA,\s)),\] which contradicts Lemma~\ref{LEM:LHTCA}(I). Thus, every cell attached adds a generator in homology, and in particular, since 
\[c_i(\varPsi^{\ast}_{\RS^\ast(\K(\HA,\s),\OD\cap H)})=\beta_i\big(\OD{\setminus} \HA\big)\]
for all $i$, we have
$c_i(\varOmega^{\ast})=\beta_i(S^d{\setminus} \HA)$
for all $i$. \qedhere
\end{compactenum}
\end{proof}

\newcommand\Ho{\mathrm{H}}
\appendix
\section{Appendix}
\subsection{Proof of Lemma~\ref{LEM:LHTCA}}
Let us recall the Goresky--MacPherson formula. For an element $p$ of a poset $\mathcal{P}$, we denote by $\mathcal{P}_{<p}$ the poset of elements of $\mathcal{P}$ that precede $p$ with respect to the order of the poset. Let $\Delta(\mathcal{P})$ denote the order complex of a poset $\mathcal{P}$.  Let $\widetilde{\Ho}_\ast$ resp.\ $\widetilde{\Ho}^\ast$ denote reduced homology resp.\ cohomology, and let $\widetilde{\beta}_\ast$ denote the reduced Betti number.

\begin{theorem}[Goresky--MacPherson {\cite[Sec.\ III, Thm.\ A]{GM-SMT}}, {\cite[Cor.\ 2.3]{ZieZiv}}]\label{thm:gmf}
Let $\HA$ denote an arrangement of affine subspaces in $\R^d$. Let $\mathcal{P}(\HA)$ denote the poset of nonempty intersections of elements of $\HA$, ordered by reverse inclusion. Then, for all $i$,
\[\widetilde{\Ho}^i(\R^d{\setminus}{\HA}; \mathbb{Z} )\cong\bigoplus_{p\in \mathcal{P}}\widetilde{\Ho}_{d-2-i-\dim p}\big(\Delta(\mathcal{P}_{<p}(\HA)); \mathbb{Z} \big).\]
\end{theorem}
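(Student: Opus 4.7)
The plan is to follow the combinatorial/homotopy-theoretic proof due to Ziegler--\v{Z}ivaljevi\'c \cite{ZieZiv}, which the statement itself cites and which is closest in spirit to the methods of this paper. The argument has three essentially independent ingredients: Alexander duality, a homotopy-colimit model for the union $\HA$, and a wedge decomposition of that colimit in terms of order complexes.

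First I would pass to the one-point compactification $S^d = \R^d \cup \{\infty\}$. Because $\HA$ is closed in $\R^d$, its one-point compactification $\HA^+ := \HA \cup \{\infty\}$ is a closed subset of $S^d$, and Alexander duality in the sphere gives
\[\widetilde{H}^i(\R^d \setminus \HA;\mathbb{Z}) \;\cong\; \widetilde{H}_{d-i-1}(\HA^+;\mathbb{Z}).\]
This reduces the claim to proving $\widetilde{H}_n(\HA^+) \cong \bigoplus_{p} \widetilde{H}_{n-1-\dim p}(\Delta(\mathcal{P}_{<p}(\HA)))$ with $n = d-i-1$.

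Next I would present $\HA^+$ as the homotopy colimit of the diagram over the opposite of the intersection poset $\mathcal{P}(\HA)$ that sends each flat $p$ to its compactification $p^+\simeq S^{\dim p}$, with the inclusions of flats as structure maps. Since every inclusion $p\subset q$ of affine flats of different dimensions gives a null-homotopic map $p^+\to q^+$ after compactification (a sphere of smaller dimension inside one of larger dimension is contractible there), the diagram satisfies the hypotheses of the \emph{wedge/projection lemma} of Ziegler--\v{Z}ivaljevi\'c, yielding a homotopy equivalence
\[\HA^+ \;\simeq\; \bigvee_{p\in\mathcal{P}(\HA)} p^+ \ast \Delta(\mathcal{P}_{<p}(\HA)),\]
where $\ast$ denotes the topological join. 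Combining this with the identity $\widetilde{H}_n(S^k\ast Y)\cong\widetilde{H}_{n-k-1}(Y)$ applied to $k=\dim p$ gives exactly the desired identification of $\widetilde{H}_n(\HA^+)$, and substituting $n=d-i-1$ produces the Goresky--MacPherson formula in the form stated.

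The main obstacle is the rigorous verification of the wedge lemma. One has to exhibit a simplicial or CW model of the poset diagram in which the compactified inclusions become literal null-homotopies (so that the homotopy colimit can be analyzed via iterated mapping cones), and then inductively peel off the contribution of each flat along an order filtration of $\mathcal{P}(\HA)$ to split the colimit as a wedge. This combinatorial step is the technical heart of the argument and explains geometrically why the resulting formula depends only on the abstract intersection poset together with the dimensions of the flats, rather than on any finer geometric data about $\HA$.
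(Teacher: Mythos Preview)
The paper does not prove this theorem at all: Theorem~\ref{thm:gmf} is stated in the appendix purely as a citation of Goresky--MacPherson~\cite{GM-SMT} and Ziegler--\v{Z}ivaljevi\'c~\cite{ZieZiv}, and is then used as a black box in the proof of Lemma~\ref{LEM:LHTCA}. There is no ``paper's own proof'' to compare against.

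That said, your outline is a faithful sketch of the Ziegler--\v{Z}ivaljevi\'c argument that the paper cites as \cite[Cor.~2.3]{ZieZiv}, so in that sense you are reproducing exactly the source the paper points to. The three steps (Alexander duality in $S^d$, the homotopy-colimit model for the compactified union, and the wedge/projection lemma exploiting that inclusions of compactified flats are null-homotopic) are correct, and the join-suspension identity $\widetilde{H}_n(S^k\ast Y)\cong\widetilde{H}_{n-k-1}(Y)$ finishes the computation. One small point worth tightening if you write this out in full: the diagram whose homotopy colimit recovers $\HA^+$ must be set up carefully (one typically augments $\mathcal{P}(\HA)$ by a minimal element corresponding to the ambient $\R^d$, or equivalently takes the cone point into account), since the actual colimit of the flats is $\HA$, not $\HA^+$, and the passage to compactifications requires a cofibrant replacement to identify colimit with homotopy colimit. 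This is handled in~\cite{ZieZiv} but is easy to get wrong in a sketch.
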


Apart from the Goresky--MacPherson formula, we will make use of the following observation: We say that a subposet $\mathcal{Q}$ of a poset $\mathcal{P}$ is a \Defn{truncation} of $\mathcal{P}$ if every element of $\mathcal{P}$ not in $\mathcal{Q}$ is a maximal element of~$\mathcal{P}$. In this case, for every element $q$ of $\mathcal{Q}$, we have that $\mathcal{Q}_{<q}=\mathcal{P}_{<q}$, and in particular, $\Delta(\mathcal{Q}_{<q})=\Delta(\mathcal{P}_{<q})$.

\smallskip

We turn to the proof of of Lemma~\ref{LEM:LHTCA}.

\begin{proof}[\textbf{Proof of Lemma~\ref{LEM:LHTCA}}]

The arrangement $\HA$ gives rise to a subspace arrangement $\HA_{\Sp}$ in $\R^{d+1}$ by considering, for every element $H$ of $\HA$, the subspace $\Sp(H)$ in $\R^{d+1}$. If $x$ is the midpoint of the open hemisphere $\OD$ in $S^d\subset \R^{d+1}$, and $H_{\TT}$ denotes the hyperplane in $\R^{d+1}$ tangent to $S^d$ in $x$, then $\HA_{\Sp}^{H_{\TT}}$, defined as the collection of intersections of elements of $\HA_{\Sp}$ with $H_{\TT}$, is a subspace arrangement in $H_{\TT}$. 

Furthermore, if $\zeta$ is a central projection of $\OD$ to $\R^d$, then for every element $h$ of $\HA$, $\zeta(h\cap \OD)$ is an affine subspace in $\R^d$. The collection of subspaces $\zeta(h\cap \OD),\ h\in \HA,$ gives an affine subspace arrangement $\HA_\zeta$ in~$\R^d$. Now, we have that
\begin{compactenum}[(I)]
\item $\R^{d+1}{\setminus} \HA_{\Sp}$ is homotopy equivalent to  $S^{d}{\setminus} \HA$, and $H_{\TT}{\setminus}\HA_{\Sp}^{H_{\TT}}$ is homeomorphic to $\OD{\setminus} {\HA}$; and
\item $\R^{d}{\setminus} {\HA}_{\zeta}$ is homeomorphic to $\OD{\setminus} {\HA}$, and $\zeta(\OD\cap H) {\setminus} {\HA}_{\zeta}$ is homeomorphic to $(\OD\cap H){\setminus} {\HA}$.
\end{compactenum}
Thus, both statements (I), (II) of the lemma are special cases of the following claim for affine subspace arrangements in $\R^d$.

\smallskip 

\noindent \emph{Let $\HA$ denote any affine subspace arrangement in $\R^d$, and let $H$ denote any hyperplane in $\R^d$ in general position with respect to $\HA$. Then, for all $i$ \[{\beta}_i(\R^d{\setminus} \HA)\ge{\beta}_i(H{\setminus} \HA).\]}
\vskip -4mm
\noindent Recall that we use, for an element $p$ of $\mathcal{P}(\HA)$ intersecting the hyperplane $H$, the notation $p^{H}:=p\cap H$ to denote the corresponding subspace of $H$. Recall also that $\HA^{H}$ denotes the affine arrangement obtained as the union of $h^{H}$, $h\in \HA$. Now, $\mathcal{P}(\HA^{H})$ is isomorphic to a truncation of $\mathcal{P}(\HA)$, so for every element $p$ of $\mathcal{P}(\HA)$ intersecting $H$, we have
\[\Delta(\mathcal{P}_{<{p^{H}}}(\HA^{H}))\cong\Delta(\mathcal{P}_{<p}(\HA)).\]
By this observation, and using the Goresky--MacPherson formula (Theorem~\ref{thm:gmf}), we obtain
\[\widetilde{\beta}_i(H{\setminus}{\HA})=\sum_{p\in \mathcal{P}(\HA^{H})}\widetilde{\beta}_{d-2-i-\dim p}\big(\Delta(\mathcal{P}_{<p}(\HA^{H}))\big)=\sum_{\substack {p\in \mathcal{P}(\HA) \\ p\cap H\ne \emptyset}}\widetilde{\beta}_{d-2-i-\dim p}\big(\Delta(\mathcal{P}_{<p}\HA))\big).\] 
Using Theorem~\ref{thm:gmf} once more, we furthermore see that
\[\widetilde{\beta}_i(\R^d{\setminus}{\HA})=\sum_{p\in \mathcal{P}(\HA)}\widetilde{\beta}_{d-2-i-\dim p}\big(\Delta(\mathcal{P}_{<p}(\HA))\big)\ge \sum_{\substack {p\in \mathcal{P}(\HA) \\ p\cap H\ne \emptyset}}\widetilde{\beta}_{d-2-i-\dim p}\big(\Delta(\mathcal{P}_{<p}(\HA))\big).\]
Consequently, we get \[\widetilde{\beta}_i(\R^d{\setminus}{\HA})\ge\widetilde{\beta}_i(H{\setminus}{\HA}). \qedhere\]
\end{proof}

\subsection{Minimality of {\em c}-arrangements}\label{sec:car}
Recall that a \Defn{$c$-arrangement} $\HA$ in $S^d$ (resp.\ in $\R^d$) is a finite collection $(h_i)_{i\in [1,n]}$ of distinct $(d-c)$-dimensional subspaces of $S^d$ (resp.\ of $\R^d$), such that the codimension of any non-empty intersection of its elements is divisible by $c$. 
\begin{quote}
\emph{Is the complement of any $c$-arrangement $\HA$ a minimal space?}
\end{quote}
For $c=2$, the answer is positive, as we saw in Corollary~\ref{mcor:m}. For $c  \neq 2$, the answer is also positive. In fact, the complement $\HA^{\comp}$ of any $c$-arrangement $\HA$ in $S^d$ or $\R^d$, $c\neq 2$, has the following properties:
\begin{compactitem}[$\circ$]
\item $\HA^{\comp}$ has the homotopy type of a CW complex. (This holds for arbitrary subspace arrangements.)
\item $\HA^{\comp}$ has no torsion in homology. (This holds for arbitrary $c$-arrangements, cf.~\cite[Sec.\ III Thm.\ B]{GM-SMT}.)
\item Every connected component of $\HA^{\comp}$ is simply connected. (This is easy to prove, but holds only if $c \neq 2$.)
\end{compactitem} 
Now, \emph{any topological space satisfying these three properties is a minimal space}. This is proven in Hatcher~\cite[Prp.\ 4C.1]{Hatcher}; see also Anick~\cite[Lem.\ 2]{Anick} and Papadima--Suciu~\cite[Rem.\ 2.14]{PapadimaSuciu}. Consequently, Corollary~\ref{mcor:m} can be re-stated as follows:

\begin{cor}
The complement of any $c$-arrangement is a minimal space.
\end{cor}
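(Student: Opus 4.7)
The plan is to split into the cases $c=2$ and $c\neq 2$. For $c=2$, the statement is exactly Main Corollary~\ref{mcor:m}, which has already been established via the perfect discrete Morse functions produced by Theorem~\ref{thm:BZperfect}; nothing further is required.

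For $c\neq 2$, the strategy is to invoke a general minimality criterion from rational homotopy theory (Hatcher's Proposition~4C.1, equivalently Anick's Lemma~2 or the remark of Papadima--Suciu) which asserts that any topological space that (a) has the homotopy type of a CW complex, (b) has torsion-free integral homology, and (c) has simply connected path components is automatically minimal. I would therefore verify each of these three conditions for $\HA^{\comp}$.

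Property (a) holds for complements of arbitrary subspace arrangements, for instance by a triangulation argument, or via the Bj\"orner--Ziegler complement complexes recalled in Section~\ref{ssc:2-arrangements}. Property (b) follows from Goresky--MacPherson's Theorem~B in~\cite[Sec.~III]{GM-SMT}, which explicitly states that the integral cohomology of the complement of a $c$-arrangement is torsion-free. Property (c) is the place where the hypothesis $c\neq 2$ is essential: every element of $\HA$ has real codimension $c\geq 3$ (the case $c=1$ being trivial since then every component is convex, hence contractible), and every nonempty intersection of elements has codimension a multiple of $c\geq 3$. A standard general-position/transversality argument then shows that a loop in a path component of $\HA^{\comp}$ can be capped off by a $2$-disk perturbed to miss $\HA$, so that each path component is simply connected.

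The main point of the argument, and essentially the only piece that is not a direct citation, is the codimension-$\geq 3$ transversality step used to obtain simple connectivity; this is routine but is precisely where the $c\neq 2$ hypothesis is used. Combining (a), (b) and (c) with Hatcher's criterion concludes the case $c\neq 2$, and together with the previously proved case $c=2$ this yields the Corollary.
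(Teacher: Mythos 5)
Your proposal is correct and follows essentially the same route as the paper: the case $c=2$ is Corollary~\ref{mcor:m}, and for $c\neq 2$ the paper likewise verifies the three hypotheses (CW homotopy type, torsion-free homology via Goresky--MacPherson's Theorem~B, and simple connectivity of components, which fails only for $c=2$) and invokes Hatcher's Proposition~4C.1. The only difference is that you spell out the codimension-$\geq 3$ transversality argument for simple connectivity, which the paper leaves as "easy to prove."
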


In the rest of this section, we provide an analogue of Theorem~\ref{MTHM:BZP} for $c$-arrangements. First, some definitions. A \Defn{sign extension} $\SH$ of a $c$-arrangement $\HA=(h_i)_{i\in [1,n]}$ in $S^d$ is any collection of hyperplanes \[(H_{ij})\subset S^d,\ {i\in [1,n],\ j\in[1,c-1]},\] such that $\cap_{j\in [1,c-1]} H_{ij}=h_i$ for each $i$. A \Defn{hyperplane extension} $\EH$ of $\HA$ in $S^d$ is a sign extension of $\HA$ together with an arbitrary (but finite) collection of hyperplanes in $S^d$. The subdivision of $S^d$ into convex sets induced by $\EH$ is the \Defn{stratification} of $S^d$ given by $\EH$. A stratification is \Defn{combinatorial} if it is a regular CW complex.

Let $\s$ be a combinatorial stratification of the sphere $S^d$, given by some fine extension of a $c$-arrangement~$\HA$. Let $\s^\ast$ be the dual block complex of $\s$. Define $\K(\HA,\s):=\RS^\ast(\s^\ast,S^d{\setminus} \HA)$. The regular CW complex $\K(\HA,\s)$ is the \Defn{complement complex} of $\HA$ induced by $\EH$. Complement complexes of $c$-arrangements in $\R^d$ are defined by restricting the complement complex of a spherical arrangement to a general position hemisphere, cf.\ Definition~\ref{def:affine}.

\begin{theorem}\label{thm:BZperfectc}
Any complement complex of any $c$-arrange\-ment $\HA$ in $S^d$ or~$\R^d$ admits a perfect Morse matching.
\end{theorem}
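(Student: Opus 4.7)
The plan is to mirror the proof of Theorem~\ref{thm:BZperfect} essentially line-by-line, with the systematic replacement of the constant $\cc=2$ by the general integer $c$ in all auxiliary statements. Concretely, I would redefine $\iota(d) := \lfloor\nicefrac{d}{c}\rfloor - 1$ and $e := \lceil\nicefrac{d}{c}\rceil = d - \lfloor\nicefrac{d}{c}\rfloor$, and work with $c$-arrangements throughout, where a sign extension now uses $c-1$ hyperplanes per arrangement element (rather than just one as in the $c=2$ case). All pieces of the duality/complement matching machinery from Section~\ref{ssc:cmpm} are already stated for regular CW complexes and make no reference to the value of $\cc$, so they transfer for free.

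The first substantive step is to re-establish the hemisphere theorem (Theorem~\ref{thm:hemisphere}) for $c$-arrangements. The induction on $(d,k)$ has the same three-part structure. Part~I carries over because the non-essential case still deformation retracts to a common intersection, and Part~II (the base case $k=d-c$, where $H$ is itself an element of $\HA$ and the pair is $(C,C)$) is also unchanged. Part~III is the one that requires real care: after removing the next vertex $v_i$, the link $\Lk(v_i,\HA^H)$ is a $c$-arrangement in $\RN_{v_i}^1 H$, and the dichotomy ``link essential vs.\ non-essential'' must now be analysed modulo~$c$ rather than modulo~$2$. When either $k$ or $d$ is congruent to $1$ mod~$c$, the link becomes non-essential and one invokes the inductively-established collapsible-pair conclusion; otherwise the link is a genuine $c$-arrangement and one invokes out-$\iota(k-1)$ collapsibility. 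The numerical check that powers the cascade is that $\iota(d-1) = \iota(d) - 1$ exactly when $d \equiv 0 \pmod c$, and this is precisely what Lemma~\ref{lem:outcoll} needs in order to reduce out-$\iota(d)$ collapsibility of $(\Sigma_i,\RS(\Sigma_i,\HA))$ to the inductive hypothesis on the link.

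Once this general-$c$ Theorem~\ref{thm:hemisphere} is available, the analogues of Lemma~\ref{lem:lef} and Corollary~\ref{cor:lef} transfer by literal translation, because their proofs only invoke Theorem~\ref{thm:hemisphere}, Lemma~\ref{lem:hemisphere} and link-based vertex-stripping arguments. Lemma~\ref{lem:1} then generalises without change of substance: one builds the Morse matching $\varPsi$ inductively on $d$ by combining a matching lifted from a generic hyperplane section with the out-$\iota(d)$ collapsing sequence from the $c$-analogue of Corollary~\ref{cor:lef}, and the fact that each newly attached $e$-cell must add (rather than kill) a generator in homology is enforced by Lemma~\ref{LEM:LHTCA}. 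Crucially, Lemma~\ref{LEM:LHTCA} is proved directly from the Goresky--MacPherson formula for arbitrary subspace arrangements, so it applies to $c$-arrangements verbatim. The final proof of Theorem~\ref{thm:BZperfectc} then splits into the spherical and affine cases exactly as in Theorem~\ref{thm:BZperfect}, bounding the critical cells by the Betti numbers via Lemma~\ref{LEM:LHTCA}.

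The main obstacle I expect is the case analysis in Part~III of the generalised Theorem~\ref{thm:hemisphere}. For $c=2$ there are only two residues mod~$c$ to juggle, and the original proof dispatches them in two bullet points; for general $c$ one has $c$ residue classes for each of $k$ and $d$ and must check that the pairing between ``essential-link'' and ``non-essential-link'' cases correctly tracks the arithmetic of the out-$j$ collapsibility index. One must verify that walking through residues $0,1,\ldots,c-1 \pmod c$ contributes either a collapsible-pair step (when the link is non-essential) or a decrement of the out-$j$ index (when the link remains an essential $c$-arrangement), so that after traversing a full block of $c$ consecutive residues the index drops by exactly~$1$, consistently with the identity $\iota(d)-\iota(d-c)=1$. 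Once this bookkeeping is in place, the rest of the argument follows the $c=2$ template without further surprises.
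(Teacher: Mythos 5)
Your proposal is correct and takes essentially the same approach as the paper: the paper's entire proof of Theorem~\ref{thm:BZperfectc} is the single sentence that it is analogous to the proof of Theorem~\ref{thm:BZperfect}, and your plan is a faithful elaboration of that analogy, with the right generalizations $\iota(d)=\lfloor d/c\rfloor-1$, the mod-$c$ essential/non-essential dichotomy for vertex links, and the identity $\iota(d-1)=\iota(d)-1$ precisely when $c\mid d$ feeding Lemma~\ref{lem:outcoll}. You in fact supply more of the bookkeeping than the paper does.
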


\begin{proof}
The proof of this theorem is analogous to the proof of Theorem~\ref{thm:BZperfect}.
\end{proof}

\noindent {\bf {\em Acknowledgements}} Special thanks to Bruno Benedetti, who communicated the problem solved in this paper (minimality of 2-arrangements) to me, and thanks also to Alex Suciu, who came up with it in the first place. Thanks to G\"unter Ziegler for helpful discussions and suggestions. Many thanks to the Institut Mittag-Leffler, Sweden, and the Hebrew University of Jerusalem, Israel, where the research leading to this article was conducted.
{\small
\newcommand{\etalchar}[1]{$^{#1}$}
\providecommand{\noopsort}[1]{}\def\cprime{$'$}
\providecommand{\bysame}{\leavevmode\hbox to3em{\hrulefill}\thinspace}
\providecommand{\MR}{\relax\ifhmode\unskip\space\fi MR }
\providecommand{\MRhref}[2]{%
  \href{http://www.ams.org/mathscinet-getitem?mr=#1}{#2}
}
\providecommand{\href}[2]{#2}

}

\end{document}